\DeclareFontFamily{U}{mathx}{\hyphenchar\font45}
\DeclareFontShape{U}{mathx}{m}{n}{
      <5> <6> <7> <8> <9> <10>
      <10.95> <12> <14.4> <17.28> <20.74> <24.88>
      mathx10
      }{}
\DeclareSymbolFont{mathx}{U}{mathx}{m}{n}
\DeclareMathAccent{\widecheck}{0}{mathx}{"71}
\numberwithin{equation}{section}
\newcommand{\ie}{{i.e.}} %
\newcommand{\hosymbol}{
\begin{tikzpicture}[scale=1.7]
\draw[very thin] (0,0) circle (0.05cm);
\filldraw[black] (0,0)--(0,-0.05cm) arc (-90:90:0.05cm) -- (0,0);
\end{tikzpicture}
}
\newcommand{\covcirc}{
\begin{tikzpicture}[scale=.3,baseline=-0.5ex]
    \draw[fill=black] (0,0) circle (.6ex);
\end{tikzpicture}
}
\newcommand{\Deltaprime}{{\Delta^{\!\hosymbol}}}
\newcommand\Pint[1]{{[a,b]_\P}}
\newcommand\Defn[1]{\textbf{\color{black}#1}}
\newcommand\hopen{\mathbb{H}}%
\newcommand\parents{\mathcal{N}}
\renewcommand\v{\mathbf{v}}
\newcommand\p{\mathbf{p}}
\newcommand\q{\mathbf{q}}
\renewcommand\r{\mathbf{r}}
\newcommand\Ptop{\widehat{1}}
\newcommand\Pbot{{\widehat{0}}}
\renewcommand\emptyset{\varnothing}
\newcommand\Z{\mathbb{Z}}               
\newcommand\Znn{\mathbb{Z}_{\ge0}}               
\newcommand\R{\mathbb{R}}               
\renewcommand\c{\mathbf{c}}
\newcommand\w{\mathbf{w}}
\newcommand\x{\mathbf{x}}
\newcommand\y{\mathbf{y}}
\newcommand\SymGrp{\mathfrak{S}}
\renewcommand\P{P}
\newcommand\Phat{{\widecheck{\P}}}
\newcommand\Po{\mathcal{P}}
\newcommand\0{\mathbf{0}}
\newcommand\1{\mathbf{1}}
\DeclareMathOperator{\stat}{Ldes}
\DeclareMathOperator{\GenFun}{\mathcal{G}}
\DeclareMathOperator{\Des}{Des}
\DeclareMathOperator{\des}{des}
\DeclareMathOperator{\iDes}{iDes}
\DeclareMathOperator{\ides}{ides}
\DeclareMathOperator{\bigasc}{asc^{(2)}}
\DeclareMathOperator{\bigiasc}{iasc^{(2)}}
\DeclareMathOperator{\Int}{int}
\DeclareMathOperator{\conv}{conv}
\DeclareMathOperator{\vol}{vol}
\DeclareMathOperator{\aff}{aff}
\newcommand{\tauinv}{{\hat\tau}}
\newcommand\Lip{\mathscr{L}}%
\newcommand\cov{{\,\prec\!\!\covcirc\,}}
\newcommand\eqdef{ :=  }
\newcommand\eq{\ = \ }
\newcommand\newmid{\ : \ }
\newtheorem{thm}{Theorem}[section]
\newtheorem{cor}[thm]{Corollary}
\newtheorem{lem}[thm]{Lemma}
\newtheorem{prop}[thm]{Proposition}
\newtheorem{conj}[thm]{Conjecture}
\theoremstyle{definition}
\newtheorem{example}[thm]{Example}
\newtheorem{openproblem}[thm]{Open problem}
\title{Lipschitz polytopes of posets and permutation statistics}
\author{Raman Sanyal}
\address{Institut f\"ur Mathematik, Goethe-Universit\"at Frankfurt, Germany}
\email{sanyal@math.uni-frankfurt.de}
\author{Christian Stump}
\address{Institut f\"ur Mathematik, Freie Universit\"at Berlin, Germany}
\email{christian.stump@fu-berlin.de}
\thanks{R.~Sanyal was supported by the DFG-Collaborative Research Center, TRR
109 ``Discretization in Geometry and Dynamics''. C.~Stump was supported by the
DFG grant STU 563/2 ``Coxeter-Catalan combinatorics''.  }
\keywords{posets, isotone functions, Lipschitz polytopes,
lattice polytopes, Gorenstein polytopes, permutation statistics}
\subjclass[2010]{
06A07, %
05A05, %
52B12} %
\date{\today}
\begin{document}

\begin{abstract}
    We introduce Lipschitz functions on a finite partially
    ordered set $\P$ and study the associated Lipschitz
    polytope $\Lip(\P)$. The geometry of $\Lip(\P)$ can be
    described in terms of descent-compatible permutations
    and permutation statistics that generalize descents and
    big ascents. For ranked posets, Lipschitz polytopes are
    centrally-symmetric and Gorenstein, which implies
    symmetry and unimodality of the statistics. Finally, we
    define $(\P,k)$-hypersimplices as generalizations of
    classical hypersimplices and give combinatorial
    interpretations of their volumes and $h^*$-vectors.
\end{abstract}

\maketitle

\section{Introduction}\label{sec:intro}

\newcommand\OK{\mathcal{K}}%
\newcommand\OC{\mathcal{C}}%
\newcommand\OP{\mathcal{O}}%
Let $(\P,\preceq)$ a finite partially ordered set (or \Defn{poset}, for
short).  A function $f : \P \rightarrow \R$ is \Defn{isotone} or \Defn{order
preserving} if
\[
    f(a) \ \le \ f(b) \quad \text{whenever} \quad a \ \preceq \ b \,.
\]
The \Defn{order cone} $\OK(\P)$ of~$\P$ is the collection of nonnegative
isotone functions. The order cone is a gateway for a geometric perspective on
enumerative problems on posets. The interplay of combinatorics and geometry is,
in particular, fueled by analogies to \emph{continues} mathematics. For
example, Stanley's order polytope~\cite{TwoPoset} is the set
\[
    \OP(\P) \ = \ \big\{ f \in \OK(\P) \newmid \|f\|_\infty \le 1 \big\} \,,
\]
where $\|f\|_\infty = \max \{ f(a) : a \in \P \}$. The theory of~$\P$-partitions
concerns those $f \in \OK(\P)$ with $\|f\|_1 = \sum_a f(a) = m$ for some
fixed~$m$.  In this paper, we want to further the analogies to continuous
functions.  For two elements $a, b \in \P$, we denote the minimal length of a
saturated (or unrefineable) chain from~$a$ to~$b$ by~$d_\P(a,b)$ and
set $d_\P(a,b) := \infty$ if $a \not\preceq b$. Then $d_\P$ is a
\emph{quasi-metric} on $\P$. An isotone function $f : (\P,\preceq) \to \R$ is
\Defn{$\boldsymbol{k}$-Lipschitz} if 
\[
    f(b) - f(a) \ \le \ k \cdot d_\P(a,b)
\]
for all $a \preceq b$. We say that a function $f$ is Lipschitz if $f$ is
$1$-Lipschitz. Let us write $\Phat$ for the poset obtained from $\P$ by
adjoining a minimum $\Pbot$. The collection $\widetilde{\Lip}(\Phat)$ of
isotone Lipschitz functions on $\Phat$ is naturally an unbounded polyhedron and
$k$-Lipschitz functions are precisely the elements in $k \cdot
\widetilde{\Lip}(\Phat)$. The lineality space of $\widetilde{\Lip}(\Phat)$ is
given by all constant functions and we define the \Defn{Lipschitz
polytope} of~$\P$ as 
\[
    \Lip(\P) \ \eqdef \ \big\{ f \in \OK(\Phat) \newmid f \text{ Lipschitz},
    \  f(\Pbot) =
    0 \big\}\,.
\]
Concretely, the Lipschitz polytope of $(\P,\preceq)$ is given by
\begin{equation}\label{eqn:Lip}
    \Lip(\P) \ = \ \left\{ f \in \R^\P :
    \begin{array}{c@{\ \le \ }c@{\ \le \ }cl}
        0 & f(a) & 1 & \text{ for } a \in \min \P \\
        0 & f(b) - f(a) & 1 & \text{ for } a \cov b\\
    \end{array}
    \right\},
\end{equation}
where $a \cov b$ denotes the cover relations of $\P$.

A different motivation for the study of $\Lip(\P)$ comes from $G$-Shi
arrangements. The Hasse diagram of~$\Phat$ is the directed graph $G$ on nodes
$\Phat$ with arcs $(a,b)$ whenever $a \cov b$ is a cover relation. The
corresponding \Defn{$\boldsymbol G$-Shi arrangement} is the arrangement of
affine hyperplanes $\{ x_b - x_a = 0\}$ and $\{x_b - x_a = 1\}$ for $a \cov
b$.  The $G$-Shi arrangements generalize the classical Shi
arrangements~\cite[Ch.~7]{Shi} and naturally occur in the geometric
combinatorics of parking functions and spanning trees; see~\cite{HP}.  The
Lipschitz polytope $\Lip(\P)$ is thus a particular (relatively) bounded region
of the $G$-Shi arrangement associated to the Hasse diagram of~$\P$.

We give some basic geometric properties of Lipschitz polytopes in
\Cref{sec:basics} and, in particular, show that $\Lip(\P)$ is always a lattice
polytope. Hence, the function
\[
    E(\Lip(\P),k) \ := \  \bigl|\Lip(\P) \cap \Z^\P \bigr| 
\]
counting integer-valued $k$-Lipschitz functions agrees with a polynomial of
degree $|\P|$. In \Cref{sec:triangulations}, we describe the canonical regular
and unimodular triangulation of $\Lip(\P)$. Similar to the study of order
polytopes, the simplices of the triangulation of $\Lip(\P)$ can be described
in terms of certain permutations of $\P$. A \emph{descent-compatible}
permutation  is a labeling of~$\P$ such that the number of descents along any
saturated increasing chain with fixed endpoints is constant. The $h^*$-vector
of $\Lip(\P)$ can be determined in terms of the combinatorics of
$(\P,\preceq)$ and defines an ascent-type statistic on the collection of all
descent-compatible permutations of~$\P$ If the Hasse diagram of~$\P$ is a
rooted tree, then all permutations are descent-compatible and the statistic
corresponds to \emph{big ascents}, i.e.\ ascents that with step
size at least~$2$.

For posets $\P$ such that $\Phat$ is ranked, the Lipschitz polytope $\Lip(\P)$
is centrally-symmetric and Gorenstein. Using results from the theory of
lattice polytopes, we deduce in \Cref{sec:ranked} that the statistic on
descent-compatible permutations is symmetric and unimodal, similar to the
classical descent statistic on all permutations.

For posets $\P$ with a unique maximal element $\Ptop$, we define in
\Cref{sec:Hyp} the $(\P,k)$-hypersimplices. These are certain slaps of
$\Lip(\P)$ that generalize the well-known $(n,k)$-hypersimplices
of~\cite{GGMS}. In particular, the volumes of our $(\P,k)$-hypersimplices can
be interpreted as the number of descent-compatible permutations of $\P$ with $k$
$P$-descents. For a chain, this recovers the classical interpretation of
volumes of $(n,k)$-hypersimplices as Eulerian numbers. In~\cite{NanLi}, Li
gave an interpretation of the coefficient of $h^*$-vector of \emph{half-open}
$(n,k)$-hypersimplices as the exceedence statistic on permutations with $k$
descents and our approach to the $h^*$-vectors of Lipschitz polytopes via
half-open decompositions yields a simple geometric proof of this fact.

\section{Basic geometric properties}\label{sec:basics}

In this section, we collect basic geometric properties of Lipschitz polytopes.
To begin with, we put some examples on record.

\begin{example}[Antichains and chains]
\label{ex:trivial}
  If $\P$ is an antichain, then $\Lip(\P) = [0,1]^\P$.
  On the other hand, if $\P = [n] := \{1,\dots,n\}$ is a chain, then
  $\Lip(\P)$ is also linearly isomorphic to $[0,1]^\P$ under the map that
  takes $g \in [0,1]^\P$ to the function $f : [n] \rightarrow \R$ with $f(i) =
  g(1) + \cdots + g(i)$.  
\end{example}

Such a linear and lattice-preserving transformation from $\Lip(\P)$ to a cube
as in the previous example exists in other cases as well. Let us call a poset
$\P$ a \Defn{rooted tree} if $\P$ has a unique minimal element and the Hasse
diagram is a tree. For a connected poset, \ie, a poset for which the Hasse diagram is connected, this
is equivalent to the property that for any $b \in \P$, there is a unique
$\bar{b} \in \Phat$ with $\bar{b} \cov b$. Define $T_\P : \R^\P \to \R^\P$
with $(Tf)(b) := f(b) - f(\bar{b})$.  This is an invertible linear and
$\Z^P$-preserving transformation.

\begin{prop}\label{prop:rooted_cube}
    Let $\P$ be a rooted tree. Then $T(\Lip(\P))  =  [0,1]^\P$.
\end{prop}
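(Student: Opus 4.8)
The plan is to show that $T$ carries the defining inequalities of $\Lip(\P)$ bijectively onto the box inequalities of $[0,1]^\P$, so that $f \in \Lip(\P)$ if and only if $Tf \in [0,1]^\P$; since $T$ is invertible, this yields the claimed equality of polytopes directly, without any volume or vertex computation.

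First I would record the combinatorial structure underlying $T$. Because $\P$ is a rooted tree, it has a unique minimal element $r$, so $\min \P = \{r\}$, and in $\Phat$ every element $b \in \P$ has a unique parent $\bar b \in \Phat$ with $\bar b \cov b$, where $\bar r = \Pbot$. Consequently the cover relations of $\Phat$ are exactly the pairs $(\bar b, b)$ for $b \in \P$: those with $\bar b \neq \Pbot$ are precisely the cover relations of $\P$, while the single pair $(\Pbot, r)$ accounts for the minimal element. This gives a bijection between the elements of $\P$ and the constraints appearing in \eqref{eqn:Lip}, which is the bookkeeping I will use.

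Next I would translate each inequality in \eqref{eqn:Lip}. Using the normalization $f(\Pbot) = 0$ built into the description of $\Lip(\P)$ (which is also what makes $(Tf)(b) = f(b) - f(\bar b)$ a genuinely \emph{linear} map on $\R^\P$), the coordinate $(Tf)(b)$ equals $f(r)$ when $b = r$ and equals the cover difference $f(b) - f(\bar b)$ otherwise. Hence the minimality constraint $0 \le f(r) \le 1$ becomes $0 \le (Tf)(r) \le 1$, and for each cover relation $a \cov b$ of $\P$ the constraint $0 \le f(b) - f(a) \le 1$ becomes $0 \le (Tf)(b) \le 1$ for the unique $b$ with $\bar b = a$. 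By the bijection above, every defining inequality of $\Lip(\P)$ is accounted for exactly once, and the full system is equivalent to requiring $0 \le (Tf)(b) \le 1$ for every $b \in \P$, i.e.\ $Tf \in [0,1]^\P$.

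Therefore $f \in \Lip(\P)$ if and only if $Tf \in [0,1]^\P$, and invertibility of $T$ gives $T(\Lip(\P)) = [0,1]^\P$. I do not expect a genuine obstacle here; the only points requiring care are the bijection between cover relations (together with the single minimality constraint) and the coordinates of $Tf$, and the correct use of $f(\Pbot) = 0$ to handle the root $r$, where $\bar r = \Pbot$ lies outside $\P$.
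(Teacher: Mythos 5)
Your proof is correct, and it diverges from the paper's proof in how the reverse inclusion is obtained. The inclusion $T(\Lip(\P)) \subseteq [0,1]^\P$ is proved identically in both: the inequalities of \eqref{eqn:Lip} immediately give $0 \le (Tf)(b) \le 1$ for all $b \in \P$. For the opposite inclusion, the paper computes $T^{-1}$ explicitly --- $(T^{-1}g)(b)$ is the sum of the values of $g$ along the unique saturated chain from $\Pbot$ to $b$ --- and then verifies directly that $T^{-1}g$ satisfies \eqref{eqn:Lip} whenever $g \in [0,1]^\P$. You avoid the explicit inverse: you use the unique-parent property of rooted trees to set up a bijection between the defining inequalities of $\Lip(\P)$ (the single minimality constraint at the root plus one constraint per cover relation of $\P$) and the coordinates of $Tf$, so that the translation becomes an equivalence $f \in \Lip(\P) \Leftrightarrow Tf \in [0,1]^\P$; surjectivity of $T$ then finishes the proof. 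The two uses of the rooted-tree hypothesis are equivalent in substance (unique parent versus unique maximal chain), and your route is slightly cleaner bookkeeping, though it leans on the invertibility of $T$, which the paper asserts just before the proposition and which is itself most naturally justified by the very partial-sum formula that the paper's proof writes down. Your explicit observation that the convention $f(\Pbot) = 0$ is what makes $T$ well defined and linear on $\R^\P$, and that it correctly handles the root coordinate, is a point the paper leaves implicit.
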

\begin{proof}
    It follows from~\eqref{eqn:Lip} that $0 \le Tf(b) \le 1$ for all $b \in
    \P$ and hence $T(\Lip(\P)) \subseteq [0,1]^\P$. For any $b \in \P$, there
    is a unique maximal chain $\Pbot = c_1 \cov \cdots \cov c_k = b$ and for
    $g \in \R^\P$, it is easy to see that $(T^{-1}g)(b) = g(c_1) + \cdots +
    g(c_k)$. Thus, $(T^{-1}g)(\bar{b}) = g(c_1) + \cdots + g(c_{k-1})$ and
    $T^{-1}(g) \in \Lip(\P)$ whenever $0 \le g(b) \le 1$ for all $b \in \P$.
    This proves the claim.
\end{proof}

\begin{example}\label{ex:nontriv}
    The figure to the right, produced with the \texttt{TikZ} output of 
    \texttt{SageMath}~\cite{sagemath}, 
    illustrates $\Lip(\P)$ for the following poset on $3$ elements:
  \begin{center}
  \begin{minipage}{.4\textwidth}
    \centering
    \begin{tikzpicture}[scale=.7]
      \node (lab1) at (-2,1) {$\P =$};
      \node (1) at (-1,0) {$1$};
      \node (2) at ( 1,0) {$2$};
      \node (3) at ( 0,2) {$3$};
      \draw (1) -- (3);
      \draw (2) -- (3);
      \node (lab2) at (5,1) {$\Lip(\P) =$};
    \end{tikzpicture}
  \end{minipage}
  \begin{minipage}{.4\textwidth}
    \centering
    \begin{tikzpicture}%
      [x={(0.818921cm, -0.110558cm)},
      y={(0.573844cm, 0.143289cm)},
      z={(0.008452cm, 0.983486cm)},
      scale=2,
      back/.style={loosely dotted, thin},
      edge/.style={color=orange, thick},
      facet/.style={fill=red,fill opacity=0.400000},
      vertex/.style={inner sep=1pt,circle,draw=blue!25!black,fill=blue!75!black,thick}
      ]
    \draw[color=black,dashed,->] (0,0,0) -- (1.2,0,0) node[anchor=west]{\tiny$x_1$};
    \draw[color=black,dashed,->] (0,0,0) -- (0,1.2,0) node[anchor=west]{\tiny$x_2$};
    \draw[color=black,dashed,->] (0,0,0) -- (0,0,2.2) node[anchor=south]{\tiny$x_3$};
    \coordinate (0, 0, 0) at (0, 0, 0);
    \coordinate (0, 1, 1) at (0, 1, 1);
    \coordinate (0, 0, 1) at (0, 0, 1);
    \coordinate (1, 1, 2) at (1, 1, 2);
    \coordinate (1, 0, 1) at (1, 0, 1);
    \coordinate (1, 1, 1) at (1, 1, 1);
    \draw[edge,back] (0, 0, 0) -- (0, 1, 1);
    \draw[edge,back] (0, 1, 1) -- (0, 0, 1);
    \draw[edge,back] (0, 1, 1) -- (1, 1, 2);
    \draw[edge,back] (0, 1, 1) -- (1, 1, 1);
    \fill[facet] (1, 1, 1) -- (1, 1, 2) -- (1, 0, 1) -- cycle {};
    \fill[facet] (1, 1, 1) -- (0, 0, 0) -- (1, 0, 1) -- cycle {};
    \fill[facet] (1, 0, 1) -- (0, 0, 0) -- (0, 0, 1) -- cycle {};
    \fill[facet] (1, 0, 1) -- (0, 0, 1) -- (1, 1, 2) -- cycle {};
    \draw[edge] (0, 0, 0) -- (0, 0, 1);
    \draw[edge] (0, 0, 0) -- (1, 0, 1);
    \draw[edge] (0, 0, 0) -- (1, 1, 1);
    \draw[edge] (0, 0, 1) -- (1, 1, 2);
    \draw[edge] (0, 0, 1) -- (1, 0, 1);
    \draw[edge] (1, 1, 2) -- (1, 0, 1);
    \draw[edge] (1, 1, 2) -- (1, 1, 1);
    \draw[edge] (1, 0, 1) -- (1, 1, 1);
    \node[vertex,label=left:{\tiny$(0,1,1)$}] at (0, 1, 1)     {};
    \node[vertex,label=left:{\tiny$(0,0,0)$}] at (0, 0, 0)     {};
    \node[vertex,label=left:{\tiny$(0,0,1)$}] at (0, 0, 1)     {};
    \node[vertex,label=right:{\tiny$(1,1,2)$}] at (1, 1, 2)     {};
    \node[vertex,label=right:{\tiny$(1,0,1)$}] at (1, 0, 1)     {};
    \node[vertex,label=right:{\tiny$(1,1,1)$}] at (1, 1, 1)     {};
    \end{tikzpicture}
  \end{minipage}
  \end{center}
\end{example}

The following result is immediate and we will henceforth assume that all
posets are connected.
\begin{prop}
    If $\P = \P_1 \uplus \P_2$, then $\Lip(\P) = \Lip(\P_1) \times
    \Lip(\P_2)$. 
\end{prop}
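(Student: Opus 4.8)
The plan is to show that the linear inequality description of $\Lip(\P)$ in~\eqref{eqn:Lip} decouples completely along the two blocks $\P_1$ and $\P_2$, which immediately exhibits the polytope as a product. First I would fix the ambient coordinates: the partition of the ground set $\P = \P_1 \uplus \P_2$ gives a canonical identification $\R^\P = \R^{\P_1} \times \R^{\P_2}$, under which a function $f \in \R^\P$ corresponds to the pair $(f_1,f_2)$ with $f_i := f|_{\P_i}$. So I only need to check that the constraint system splits accordingly.

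The key bookkeeping step is to describe how the combinatorial data entering~\eqref{eqn:Lip} behaves under disjoint union. In $\P_1 \uplus \P_2$ there are no comparabilities between an element of $\P_1$ and an element of $\P_2$, so every saturated chain is contained in a single block. Consequently $\min \P = \min \P_1 \uplus \min \P_2$, and the cover relations of $\P$ are exactly the cover relations of $\P_1$ together with those of $\P_2$, with none crossing between the blocks. I would note that this is the only point deserving a word of justification: disjoint union creates no new cover relation $a \cov b$ with $a,b$ in different blocks (there is no such comparability at all), and it destroys none within a block (comparabilities and hence saturated chains inside $\P_i$ are unchanged).

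With this in hand the conclusion is forced. Each defining inequality in~\eqref{eqn:Lip} is either a bound $0 \le f(a) \le 1$ with $a \in \min \P_1$ or $a \in \min \P_2$, or a bound $0 \le f(b)-f(a) \le 1$ for a cover relation $a \cov b$ lying entirely in $\P_1$ or entirely in $\P_2$. Thus every inequality involves coordinates from only one block, and is precisely a defining inequality of $\Lip(\P_1)$ in the $f_1$-coordinates or of $\Lip(\P_2)$ in the $f_2$-coordinates. The feasible set is therefore $\{(f_1,f_2) : f_1 \in \Lip(\P_1),\ f_2 \in \Lip(\P_2)\}$, which is exactly $\Lip(\P_1)\times\Lip(\P_2)$.

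There is essentially no hard step here; the entire content is the structural observation that cover relations of a disjoint union are the disjoint union of the cover relations. Once that is recorded, the product decomposition of the polytope is a direct reading of the inequality system, and by the same token the identification is visibly lattice-preserving, so the statement holds over $\Z^\P$ as well.
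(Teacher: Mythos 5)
Your proof is correct: the paper states this result as ``immediate'' and gives no proof at all, and your argument---that cover relations and minimal elements of a disjoint union split by block, so the inequality system in~\eqref{eqn:Lip} decouples into the systems for $\P_1$ and $\P_2$---is exactly the routine verification the authors intend. Nothing is missing, and your remark that the identification respects the lattice $\Z^\P$ is a correct (if unneeded) bonus.
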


Recall that $F \subseteq \P$ is a \Defn{filter} if $a \in F$ and $a \preceq b$,
implies $b \in F$. For a filter $F \subseteq \P$, we denote by 
\[
    \parents(F) \ \eqdef \ \big\{ a \in \Phat \setminus F \newmid b \in F \text{
    for some } a \cov b \big\} \ \subseteq \ \Phat \,.
\]
the \Defn{neighborhood} of~$F$. In particular, $\Pbot \in \parents(F)$
whenever $F$ contains a minimal element of $\P$.
A chain $\emptyset \neq F_m \subset \cdots
\subset F_1 \subseteq \P$ of filters is \Defn{neighbor-closed} if $F_{i+1}
\cup \parents(F_{i+1}) \subseteq F_i$, or, equivalently, if
\[
    a \not\in F_i \quad \Longrightarrow \quad b \not\in F_{i+1}
\]
for all $a,b \in \Phat$ with $a \cov b$ and $1 \le i < m$.
With this notion, we have the following description of the vertices of Lipschitz polytopes.

\begin{prop}\label{prop:vertices}
    Let $\P$ be a finite poset. Then
  $\v  \in \Z^\P$ is a vertex of $\Lip(\P)$ if and only if
  \[
      \v \ = \  \1_{F_m} + \cdots + \1_{F_1}
  \]
  for a neighbor-closed chain $F_m \subset \cdots \subset F_1
  \subseteq \P$ of nonempty filters in~$\P$.
\end{prop}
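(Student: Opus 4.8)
The plan is to prove the proposition by establishing two facts at once: that the lattice points of $\Lip(\P)$ are in bijection with neighbor-closed chains of filters, and that \emph{every} lattice point of $\Lip(\P)$ is in fact a vertex. Throughout I extend each $f \in \Lip(\P)$ to $\Phat$ by setting $f(\Pbot) = 0$, so that the inequalities in \eqref{eqn:Lip} read uniformly as $0 \le f(b) - f(a) \le 1$ for every cover $a \cov b$ of $\Phat$, the covers $\Pbot \cov a$ ($a \in \min\P$) encoding the bounds $0 \le f(a) \le 1$.

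First I would record the elementary observation that drives everything: if $\v \in \Z^\P \cap \Lip(\P)$, then for each cover $a \cov b$ of $\Phat$ the \emph{integer} $\v(b) - \v(a)$ lies in $[0,1]$ and hence equals $0$ or $1$. Thus at such a point at least one inequality of every bounding pair in \eqref{eqn:Lip} is tight, and the corresponding active normals are exactly the vectors $e_a$ ($a \in \min\P$) together with $e_b - e_a$ ($a \cov b$ in $\P$) --- that is, the edge vectors of the Hasse diagram of $\Phat$ after the $\Pbot$-coordinate is set to $0$. Since this Hasse diagram is connected with minimum $\Pbot$, propagating the prescribed differences along a spanning tree rooted at $\Pbot$ shows that the tight system has a unique solution; equivalently, the active normals span $\R^\P$. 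As $\Lip(\P)$ is full-dimensional, this proves $\v$ is a vertex. Combined with the fact that $\Lip(\P)$ is a lattice polytope (so every vertex is integral), this identifies the vertices of $\Lip(\P)$ with its lattice points.

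It then remains to match lattice points with neighbor-closed chains via the layer-cake decomposition. Given $\v \in \Z^\P \cap \Lip(\P)$, put $F_i \eqdef \{a \in \P : \v(a) \ge i\}$ for $1 \le i \le m := \max_a \v(a)$; each $F_i$ is a filter because $\v$ is isotone, and $\v = \1_{F_m} + \cdots + \1_{F_1}$. To see the inclusions are strict and the filters nonempty, I would use a discrete intermediate value argument: picking $c$ with $\v(c) = m$ and a saturated chain $\Pbot = c_0 \cov c_1 \cov \cdots \cov c_r = c$, the value of $\v$ starts at $0$, ends at $m$, and changes by $0$ or $1$ at each step, so every value $1,\dots,m$ is attained and each $F_i$ is properly contained in $F_{i-1}$. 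Finally I would unwind $\parents(F_{i+1}) \subseteq F_i$ and check that neighbor-closedness of the chain is literally the Lipschitz condition $\v(b)-\v(a) \le 1$ for all covers $a \cov b$ of $\Phat$. This makes $\v \mapsto (F_i)_i$ and $(F_i)_i \mapsto \sum_i \1_{F_i}$ mutually inverse bijections between $\Z^\P \cap \Lip(\P)$ and neighbor-closed chains of nonempty filters, with the zero function corresponding to the empty chain.

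The step I would be most careful about is this equivalence between the combinatorial neighbor-closed condition and the geometric Lipschitz inequality. One direction needs that a neighbor-closed chain yields a genuinely Lipschitz (not merely isotone) function; the converse needs that a single violating cover $a \cov b$ with $\v(b) - \v(a) \ge 2$ forces $a \in \parents(F_{\v(a)+2}) \setminus F_{\v(a)+1}$, breaking neighbor-closedness. The strictness of the filter chain, which rests on connectivity of $\Phat$, is the other place where the standing assumption that $\P$ is connected is genuinely used.
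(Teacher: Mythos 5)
Your proposal is correct and follows essentially the same route as the paper's proof: the key observation that integer points of $\Lip(\P)$ have differences in $\{0,1\}$ along covers of $\Phat$ (hence are determined by their tight constraints and so are vertices), the layer-cake decomposition into a chain of nonempty filters, and the translation of the Lipschitz condition into neighbor-closedness. The only difference is that you spell out two steps the paper glosses as standard --- the spanning-tree argument that the tight normals span $\R^\P$, and the intermediate-value argument along a saturated chain guaranteeing that the filter chain is strictly decreasing --- which is a careful elaboration rather than a genuinely different approach.
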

\begin{proof}
    Let us first observe that if $f \in \Lip(\P) \cap \Z^\P$, then $f(b) -
    f(a) = 1$ or $f(b) - f(a) = 0$ for all $\Pbot \preceq a \cov b$.  In
    particular, this means that every lattice point can be uniquely recovered
    from the knowledge of which defining linear inequalities are satisfied
    with equality and hence the the vertices are the only lattice points of
    $\Lip(\P)$.  It therefore suffices to show that every lattice point of
    $\Lip(\P)$ corresponds to a unique chain of filters as stated.  This is
    quite standard: Let $\{t_1 < \cdots < t_m\}$ be the distinct values of $f$
    and define $F_i \eqdef \{ a \in \P : f(a) \ge t_i \}$.  Since $f$ is
    isotone, $F_m \subset \cdots \subset F_1$ is a chain of nonempty filters
    and
    \[
        f \ = \ \1_{F_m} + \cdots + \1_{F_1} \, .
    \]
    Now, $f \in \Lip(\P)$ if and only if $f(b) - f(a) \le 1$ for all $a \cov
    b$. That is, if there is at most one $F_i$ with $b \in F_i$ and $a \not\in
    F_i$. This proves the claim.
\end{proof}

\begin{example}
\label{ex:shorttrees}
    The Lipschitz polytopes of the \emph{short rooted tree} on the left and
    the \emph{short hanging tree} on the right
  \begin{center}
    \begin{tikzpicture}[scale=.7]
      \node (1) at (-2,2) {$2$};
      \node (2) at (-1,2) {$3$};
      \node (3) at ( 0,2) {$4$};
      \node (n) at ( 3,2) {$n$};
      \node (t) at ( 0,0) {$1$};
      \node at (1,2) {$\cdots$};
      \draw (1) -- (t);
      \draw (2) -- (t);
      \draw (3) -- (t);
      \draw (n) -- (t);
    \end{tikzpicture}
    \qquad
    \begin{tikzpicture}[scale=.7]
      \node (1) at (-2,0) {$1$};
      \node (2) at (-1,0) {$2$};
      \node (3) at ( 0,0) {$3$};
      \node (n) at ( 3,0) {$n-1$};
      \node (t) at ( 0,2) {$n$};
      \node at (1,0) {$\cdots$};
      \draw (1) -- (t);
      \draw (2) -- (t);
      \draw (3) -- (t);
      \draw (n) -- (t);
    \end{tikzpicture}
  \end{center}
  have~$2n$ and, respectively, $4(n-1)$ facets by construction.  The short
  rooted tree has $2^n$ vertices in accordance with~\Cref{prop:rooted_cube}.
  These are given by $\1_F$ and $\1_F+\1_{[n]}$ for any subset $F \subseteq
  \{2,\ldots,n\}$.  On the other hand, the short hanging tree has $2^{n-1} +
  2$ vertices.  These are given by $\1_F$ for any filter~$F \subseteq [n]$,
  and $\1_{[n]} + \1_{\{n\}}$.
\end{example}

A full-dimensional polytope $\Po \subset \R^n$ is \Defn{$\boldsymbol 2$-level}
if for every facet $F \subset \Po$ there is a unique $\q \in \R^n$ such that
the two parallel hyperplanes $\aff(F)$ and $\q + \aff(F)$ contain all vertices
of~$\Po$. $2$-level polytopes enjoy many favorable geometric properties; see,
for example,~\cite{Twisted,GPT,WSZ}.  As a corollary to the proof of
\Cref{prop:vertices}, we note the following.

\begin{cor}
    Lipschitz polytopes are $2$-level.
\end{cor}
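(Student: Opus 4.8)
The plan is to show that $\Lip(\P)$ is $2$-level by using the structure uncovered in the proof of \Cref{prop:vertices}. Recall that every facet of $\Lip(\P)$ lies on one of the defining hyperplanes from~\eqref{eqn:Lip}: either $f(a) = 0$ or $f(a) = 1$ for some $a \in \min\P$, or $f(b) - f(a) = 0$ or $f(b) - f(a) = 1$ for some cover relation $a \cov b$. The key observation from the proof of \Cref{prop:vertices} is that the only lattice points of $\Lip(\P)$ are its vertices, and that at every such vertex $\v = \1_{F_m} + \cdots + \1_{F_1}$ the quantity $\v(b) - \v(a)$ equals either $0$ or $1$ for every $\Pbot \preceq a \cov b$ (and similarly $\v(a) \in \{0,1\}$ for $a \in \min\P$). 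I would therefore argue that each of the defining linear functionals $\ell$ takes only \emph{two} values on the vertex set of $\Lip(\P)$.

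Concretely, first I would fix a facet $F$. Since $\Lip(\P)$ is a lattice polytope with facet-defining inequalities as in~\eqref{eqn:Lip}, $\aff(F)$ is cut out by a single equation of the form $\ell(f) = c$ with $\ell$ one of the functionals $f \mapsto f(a)$ (for $a \in \min\P$) or $f \mapsto f(b) - f(a)$ (for $a \cov b$), and $c \in \{0,1\}$. Next I would invoke the integrality just recalled: for any such $\ell$ and any vertex $\v$, the value $\ell(\v)$ lies in $\{0,1\}$. This immediately shows that the vertices of $\Lip(\P)$ are partitioned into those with $\ell(\v) = 0$ and those with $\ell(\v) = 1$, so they all lie on the two parallel hyperplanes $\{\ell = 0\}$ and $\{\ell = 1\}$. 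One of these is $\aff(F)$ and the other is its translate by the unique $\q$ with $\ell(\q)$ equal to the difference of the two values, which proves the $2$-level property.

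The only real point needing care is verifying that every facet-defining hyperplane indeed arises from one of the listed functionals, rather than from some nonobvious valid inequality. Here I would note that \Cref{prop:vertices} identifies all lattice points, and full-dimensionality of $\Lip(\P)$ together with the fact that $\Lip(\P)$ is exactly the solution set of the inequalities in~\eqref{eqn:Lip} means that its facets are among those inequalities; since the vertices are the only lattice points, each genuine facet must be supported by one of these functionals. I expect this bookkeeping — confirming the facet description and that both parallel hyperplanes are actually attained by vertices — to be the main (though mild) obstacle; everything else is a direct consequence of the two-valued behavior of the defining functionals on integer points of $\Lip(\P)$.
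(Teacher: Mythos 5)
Your proposal is correct and follows essentially the same route the paper intends: the corollary is stated there as a direct consequence of the proof of \Cref{prop:vertices}, namely that each defining functional of~\eqref{eqn:Lip} takes only the two values $0$ and $1$ on lattice points of $\Lip(\P)$, hence on all vertices. The only material you add is the standard (and correct) bookkeeping that, since $\Lip(\P)$ is full-dimensional and is exactly the solution set of~\eqref{eqn:Lip}, every facet is supported by one of those inequalities, so each facet hyperplane has a unique parallel partner containing the remaining vertices.
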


In particular, Sullivant~\cite{Sullivant} showed that every pulling
triangulation of a $2$-level polytope is unimodular. In the following section,
we describe a unimodular triangulation of $\Lip(\P)$ that is not of
pulling-type.  It might be interesting to study the combinatorial implications
of pulling triangulations of Lipschitz polytopes.

\section{Triangulations and volumes}%
\label{sec:triangulations}%

By definition, $\Lip(\P)$ is an \emph{alcoved polytope} in the sense of
Lam--Postnikov~\cite{LP}. This implies, that $\Lip(\P)$ comes with a regular
and unimodular triangulation and we will make use of this triangulation
to compute the volumes and $h^*$-vectors of Lipschitz polytopes.  We
briefly recap the setup. One considers the affine braid arrangement in
$\R^{n+1}$ given by the hyperplanes
\[
    \{ \x \in \R^{n+1} \newmid x_i - x_j = \beta \} 
    \qquad \text{ for $0 \le i < j \le n$ and $\beta \in \Z$. }
\]
The lineality space of the braid arrangement is $\R\cdot \1$ and the
restriction to $\{ x_0 = 0\} \cong \R^n$ yields an essential arrangement
$\widetilde{\mathcal{A}}_n$ that decomposes $\R^n$ into infinitely bounded
regions.  Whenever convenient, we also refer to the $0$-th coordinate of a
point $\p \in \R^n$ which, by construction, satisfies $p_0 = 0$.

Let $\p \in \R^n$ be a point not contained in any of the hyperplanes of
$\widetilde{\mathcal{A}}_n$.  Then $p_i \not\in \Z$ for all $i$ and hence $\p
= \q + \r$ with $\q \in \Z^n$ and $\r \in (0,1)^n$. Moreover, $p_i - p_j
\not\in \Z$ implies $r_i \neq r_j$ for $i \neq j$ and there is a unique
permutation $\tau \in \SymGrp_n$ such that 
\[
    0 \ < \
    r_{\tauinv(1)} \ < \
    r_{\tauinv(2)} \ < \
    \cdots \ < \
    r_{\tauinv(n)} \ < \ 1 \,,
\]
where we write $\tauinv \eqdef \tau^{-1}$ for the inverse permutation.
Define
\[
    \Delta_\tau \ \eqdef \ \big\{ \x \in \R^n \newmid 0 \ \le \ x_{\tauinv(1)}  \ \le  \
    x_{\tauinv(2)}  \ \le \
    \cdots  \ \le  \
    x_{\tauinv(n)} \  \le \  1 \big\}.
\]
This is an $n$-dimensional simplex with integral vertices and volume
$\frac{1}{n!}$, i.e.\ $\Delta_\tau$ is a \Defn{unimodular simplex} with
respect to the lattice $\Z^n$.  The closed region containing $\p$ is given by
$\q + \Delta_\tau$ and since $\p$ was arbitrary, all closed regions of
$\tilde{\mathcal{A}}_n$ are of that  form. In particular, any two regions are
isomorphic with respect to a $\Z^n$-preserving affine transformation.  The
closed regions of $\tilde{\mathcal{A}}_n$ are called \Defn{alcoves} and a
convex polytope $\Po \subset \R^n$ is \Defn{alcoved} if it is the union of
alcoves. Thus, an alcoved polytope naturally comes with a 
triangulation into unimodular simplices. Since the triangulation is induced by
a hyperplane arrangement, it is regular.

We call a finite poset $(\P,\preceq)$ \Defn{naturally labeled} if we identify
its ground set with $\{ 1,\dots,n \}$ for $n = |\P|$ such that $a \prec b$
implies $a < b$. In this case, we can set $\Phat := \P \cup \{\Pbot := 0\}$
and with this labelling, it is clear that the Lipschitz polytope as defined
in~\eqref{eqn:Lip} is an alcoved polytope. We now determine the alcoves that
compose $\Lip(\P)$.

To later simplify notations, we set $\tau(0) \eqdef 0$ for any permutation
$\tau \in \SymGrp_n$ and we recall that $q_0 = 0$ for any $\q \in \R^n$.  We
moreover define the \Defn{descent set} 
\[
    \Des(\tau) \ \eqdef \ \big\{ 0 \le i  < n \newmid \tau(i) > \tau(i+1) \big\}
\]
and the \Defn{inverse descent set} by $\iDes(\tau) \eqdef \Des(\tauinv)$.
Thus, $i \in \iDes(\tau)$  if and only if $i$ and $i+1$ are out of order in
the one-line notation of~$\tau$.  We also set $\des(\tau)\eqdef|\Des(\tau)|$
and $\ides(\tau) \eqdef |\iDes(\tau)|$.  In particular, $\tau(0) = 0 <
\tau(1)$ so that~$0$ is never a descent or inverse descent. Any ordered subset
$S = \{ s_1 \prec s_2 \prec \cdots \prec s_k \} \subseteq \Phat$ determines a
subword of $\tau$
\[
    \tau|_S \ \eqdef \ \tau(s_1)\tau(s_2)\dots \tau(s_k) \,
\]
and we define $\des(\tau|_S)$ as the number descents of the word $\tau|_S$.

\newcommand\DC{\mathrm{DC}}%
\newcommand\desP{\des_{\P,\tau}}%

We call a permutation $\tau \in \SymGrp_n$ \Defn{descent-compatible} with $\P$
if for all $a \succ \Pbot$ and any saturated chain $C = \{ \Pbot = c_1 \cov
c_2 \cov \cdots \cov c_k = a \} \subseteq \Phat$, the number of descents
$\des(\tau|_C)$ is independent of $C$.  Of course, it suffices to require this
for all $a \in \max(\P)$.  We denote by $\DC(\P) \subseteq \SymGrp_n$ the
descent-compatible permutations of $\P$. For $\tau \in \DC(\P)$ and $a \in P$,
we write $\desP(a)$ for the number of descents of $\des(\tau|_C)$ for any
maximal chain $C$ ending in $a$.

\begin{thm}\label{thm:triang}
    Let $(\P,\preceq)$ be a naturally labeled poset. Then $\q + \Delta_\tau
    \subseteq \Lip(\P)$ for $\tau \in \SymGrp_n$ and $\q 
    \in \Z^n$ if and only if $\tau$ is descent-compatible with $\P$ and $q_a =
    \desP(a)$ for all $a \in \P$.
\end{thm}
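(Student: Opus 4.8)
The plan is to reduce the two-sided containment to a purely local condition on the cover relations of $\Phat$ and then recognize the resulting consistency requirement as descent-compatibility. First I would note that every facet-defining hyperplane of $\Lip(\P)$, namely $\{x_b - x_a = 0\}$ and $\{x_b - x_a = 1\}$ for $a \cov b$ in $\Phat$ (with $x_{\Pbot} = x_0 = 0$), is a hyperplane of the arrangement $\widetilde{\mathcal A}_n$. Hence $\Lip(\P)$ is a union of closed alcoves, and a closed alcove $\q + \Delta_\tau$ is contained in $\Lip(\P)$ if and only if a single interior point lies in $\Lip(\P)$; taking closures gives the containment of the whole simplex. So I would fix an interior point and write it as $\p = \q + \r$ with $\r \in (0,1)^n$ ordered by $\tau$ as in the setup above.

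Next I would translate the membership $\p \in \Lip(\P)$ into cover conditions. For a cover $a \cov b$ of $\Phat$ one has $p_b - p_a = (q_b - q_a) + (r_b - r_a)$ with $q_b - q_a \in \Z$ and $r_b - r_a \in (-1,1) \setminus \{0\}$, so $p_b - p_a$ is never an integer and the constraint $0 \le p_b - p_a \le 1$ is equivalent to the strict $0 < p_b - p_a < 1$. Since $r_c$ increases with $\tau(c)$ along the order $r_{\tauinv(1)} < \cdots < r_{\tauinv(n)}$, we have $r_a > r_b$ if and only if $\tau(a) > \tau(b)$, and the constraint becomes exactly $q_b - q_a = 1$ when $\tau(a) > \tau(b)$ and $q_b - q_a = 0$ otherwise. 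Crucially this condition depends only on $\q$ and $\tau$, not on the chosen interior point, confirming the all-or-nothing behavior. Thus $\q + \Delta_\tau \subseteq \Lip(\P)$ if and only if $q_b - q_a$ equals the local descent indicator of the cover $a \cov b$ for every cover of $\Phat$, where $q_{\Pbot} = 0$.

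Finally I would telescope along saturated chains. Summing the cover identity along $C = \{\Pbot = c_1 \cov \cdots \cov c_k = a\}$ and using that $\tau(\Pbot) = 0$ never creates a descent yields $q_a = \des(\tau|_C)$. In the forward direction, $q_a$ is a single number attached to $a$, so $\des(\tau|_C)$ is independent of the chain $C$ ending at $a$; this is precisely descent-compatibility, and then $q_a = \desP(a)$ by definition. Conversely, if $\tau \in \DC(\P)$ and $q_a = \desP(a)$, then extending any chain ending at $a$ by a cover $a \cov b$ raises the descent count by $1$ exactly when $\tau(a) > \tau(b)$, recovering the local identity $q_b - q_a = \desP(b) - \desP(a)$ for every cover and hence the containment.

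The main obstacle, and the conceptual heart of the argument, is this last step: recognizing that the path-independence of the telescoped potential $q_a = \des(\tau|_C)$ is exactly the combinatorial notion of descent-compatibility. The geometry only reduces the problem to the local cover identities; the global consistency of these identities across different maximal chains is what forces $\tau \in \DC(\P)$ and pins down $\q$.
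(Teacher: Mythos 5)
Your proof is correct and follows essentially the same route as the paper: reduce containment of the alcove $\q + \Delta_\tau$ to membership of a single relative-interior point (the paper uses the barycenter $c^\tau_i = \tau(i)/(n+1)$, you use a generic point $\q + \r$), derive the local condition that $q_b - q_a$ is $1$ or $0$ according to whether $a \cov b$ is a descent of $\tau$, and telescope along saturated chains to identify $q_a = \des(\tau|_C)$, whose chain-independence is exactly descent-compatibility. The only difference is expository: you make the path-independence step explicit, which the paper leaves implicit in its final sentence.
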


\begin{proof}
    Observe that $\q + \Delta_\tau$ is part of the alcoved triangulation of an
    alcoved polytope $\Po$ if and only if $\q + \c \in \Po$, where $\c$ is any
    point in the relative interior of $\Delta_\tau$. A canonical choice is the
    barycenter $\c^\tau$ of $\Delta_\tau$ given by $c^\tau_i  \eqdef
    \frac{\tau(i)}{n+1}$ for $i =1,\dots,n$.  Hence, we need to determine when
    $\q + \c^\tau$ satisfies the inequalities given in~\eqref{eqn:Lip}.  Now,
    if $a \in \P$ is a minimum, then 
    \[
        0 \ \le \ (\q + \c^\tau)_a \eq q_a + \tfrac{\tau(a)}{n+1} \ \le \ 1
    \]
    and hence $q_a = 0$. For a cover relation $a \cov b$, we calculate
    \[
        0 \ \le \ q_b - q_a + \tfrac{\tau(b) - \tau(a)}{n+1} \ \le \ 1\,.
    \]
    If $\tau(b) > \tau(a)$, then this holds if $q_b = q_a$. If $\tau(b) <
    \tau(a)$, then $q_b = q_a + 1$. Thus, $q_b$ is the number of descents of
    $\des(\tau|_C)$ where $C = \{ \Pbot = c_1 \cov c_2 \cov \cdots \cov c_k =
    b \}$ is any saturated chain.
\end{proof}

Note that if $\P = \{1,\dots,n\}$ is totally ordered, then $\DC(\P) =
\SymGrp_n$ and $\desP(i)$ is the number of descents in the word $\tau(1)
\cdots \tau(i)$.  

\begin{prop}
\label{prop:rootedtrees}
    Let $\P$ be a connected poset on $n$ elements. Then $\DC(\P) =
    \SymGrp_n$ if and only if $\P$ is a rooted tree.
\end{prop}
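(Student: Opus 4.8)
The plan is to prove the two implications separately, the forward one being essentially immediate and the converse requiring an explicit witness. For the direction ``rooted tree $\Rightarrow \DC(\P) = \SymGrp_n$'', I would invoke the characterization recalled just before the statement: if the connected poset $\P$ is a rooted tree, then every $b \in \P$ has a unique lower cover $\bar b \in \Phat$. Iterating the map $b \mapsto \bar b$ down to $\Pbot$ shows that there is a \emph{unique} saturated chain $\Pbot \cov \cdots \cov b$ for each $b$. Consequently $\des(\tau|_C)$ has no choice of $C$ on which to depend, so every $\tau \in \SymGrp_n$ is descent-compatible and $\DC(\P) = \SymGrp_n$.

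For the converse I would argue by contrapositive, assuming $\P$ is connected but not a rooted tree and producing a single $\tau \notin \DC(\P)$. By the same characterization, and since every element of $\P$ has at least one lower cover in $\Phat$ (take the top step of any saturated chain down to $\Pbot$), failure of the rooted-tree property forces some $b \in \P$ to have two distinct lower covers $a_1 \ne a_2$ in $\Phat$. A minimal element of $\P$ has $\Pbot$ as its only lower cover, so $b$ is non-minimal and $a_1, a_2 \in \P$. Picking saturated chains from $\Pbot$ to $a_1$ and to $a_2$ and appending $b$ yields two distinct saturated chains $C$ (ending $a_1 \cov b$) and $C'$ (ending $a_2 \cov b$) from $\Pbot$ to $b$. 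The target is a permutation $\tau$ that is strictly increasing along all of $C$, giving $\des(\tau|_C) = 0$, and strictly increasing along $C'$ up to $a_2$ but with $\tau(a_2) > \tau(b)$, giving $\des(\tau|_{C'}) = 1$; this single discrepancy certifies $\tau \notin \DC(\P)$.

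The main obstacle is showing these prescriptions are realizable by an honest permutation, i.e.\ that the required inequalities on $S := C \cup C'$ are consistent. I would encode them as a directed graph $G$ on $S$ with an arc $u \to v$ whenever $\tau(u) < \tau(v)$ is required: the covering arcs of $C$, the covering arcs of $C'$ up to $a_2$, and the single reversed arc $b \to a_2$. Every arc except $b \to a_2$ points upward in $\Phat$, so a directed cycle would have to traverse $b \to a_2$ and then return from $a_2$ to $b$ along upward arcs. The key point is that $a_2$ has no outgoing arc in $G$: the only lower cover of $b$ lying on $C$ is $a_1 \ne a_2$ and $a_2 \cov b$ is a cover, so $a_2 \notin C$; and within $C'$ the arc $a_2 \to b$ was deliberately omitted. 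Hence $G$ is acyclic, any topological order gives consistent distinct values on $S$ (with the source $\Pbot$ receiving value $0$), and extending arbitrarily on $\P \setminus S$ produces the desired $\tau \in \SymGrp_n$. Verifying $\des(\tau|_C) = 0 \ne 1 = \des(\tau|_{C'})$ is then immediate, which completes the contrapositive and the proof.
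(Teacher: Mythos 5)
Your proof is correct and takes essentially the same route as the paper: the forward direction uses uniqueness of saturated chains from $\Pbot$ in a rooted tree, and the converse exhibits two distinct saturated chains ending at a common element $b$ together with a permutation $\tau$ whose descent counts along the two chains differ. The paper's proof simply asserts that such a $\tau$ is ``easy to find''; your argument via the branch point with two lower covers, the acyclic constraint graph, and a topological order is a correct and complete way of filling in exactly that step.
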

\begin{proof}
    If $\P$ is a rooted tree, then there is a unique saturated chain from
    $\Pbot$ to any given $b \in \P$ and hence any $\tau \in \SymGrp_n$
    is trivially descent-compatible.
    Conversely, if there are two
    distinct maximal chains $C_1,C_2$ ending in $b \in \P$, then it is easy to
    find a permutation $\tau \in \SymGrp_n$ with $\des(\tau|_{C_1}) \neq
    \des(\tau|_{C_2})$.
\end{proof}

\begin{example}
\label{ex:shorttrees2}
  We have already seen in the previous proposition that all $n!$ permutations
  are descent compatible for the short rooted tree from \Cref{ex:shorttrees}.
  On the other hand, the descent compatible permutations for the short hanging
  tree are those $2(n-1)!$ permutations of $[n]$ for which $\tau(n) \in\{ 1,n\}$.
\end{example}

If $\Po \subset \R^n$ is a full-dimensional lattice polytope, the Ehrhart
function $E(\Po,k) \eqdef |k \Po \cap \Z^n|$ agrees with a polynomial in $k$
of degree $n = \dim \Po$ and the \Defn{$\boldsymbol h^*$-polynomial} of~$\Po$
is defined by
\[
    h^*(\Po,z) \eq h^*_0 + h^*_1 z + \cdots + h^*_n z^d  \ \eqdef 
    (1-z)^{n+1}
    \sum_{k \ge 0} E(\Po,k) z^k \, .
\]
See, for example,~\cite{BeckRobins} for details.
If $\Po$ has a unimodular triangulation, then the $h^*$-polynomial can be
computed very elegantly by means of \emph{half-open decompositions}. Let $\Po
= \Po_1 \cup \cdots \cup \Po_r$ be a dissection of~$\Po$ into (lattice)
polytopes, i.e., every $\Po_i$ is a full-dimensional (lattice) polytope that
does not meet the interior of~$\Po_j$ for $j \neq i$. A point $\w \in
\Int(\Po)$ is in general position with respect to the dissection if $\w$ is
not contained in the arrangement of facet-defining hyperplanes of~$\Po_i$ for
all $i$. The point $\w$ is \Defn{beyond} a facet $F \subset \Po_i$ if the
facet-defining hyperplane $\aff(F)$ separates $\w$ from the interior of
$\Po_i$. The \Defn{half-open} polytope associated to $\Po_i$ and $\w$ is
\[
    \hopen_\w \Po_i \ := \ \Po_i \setminus \bigcup_F F \, ,
\]
where the union is over all facets $F \subset \Po_i$ for which $\w$ is beyond.

\begin{lem}[{\cite[Thm.~3]{KV}}]\label{lem:HO}
    Let $\Po = \Po_1 \cup \cdots \cup \Po_m$ be a dissection and $\w \in
    \Int(\Po)$ in general position with respect to the dissection. Then
    \[
        \Po \ = \ 
        \hopen_\w \Po_1 \uplus
        \hopen_\w \Po_2 \uplus
        \cdots \uplus
        \hopen_\w \Po_m \, .
    \]
    In particular, if all polytopes $\Po_i$ are lattice polytopes, then
    \[
        h^*(\Po,z) \ = \ 
        h^*(\hopen_\w \Po_1,z) +
        h^*(\hopen_\w \Po_2,z) +
        \cdots +
        h^*(\hopen_\w \Po_m,z) \, .
    \]
\end{lem}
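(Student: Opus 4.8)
The plan is to prove the set-theoretic disjoint decomposition first, and then deduce the $h^*$-identity as a formal consequence. For the set-theoretic statement, I would fix the general-position point $\w \in \Int(\Po)$ and argue that every point $\x \in \Po$ lies in exactly one half-open piece $\hopen_\w \Po_i$. The key geometric idea is to use the segment (or ray) from $\w$ to $\x$: since $\w$ is in general position, I can perturb along the direction $\x - \w$ to define, for each $\x$, a ``shadow'' point $\x' = \x + \eps(\x - \w)$ just past $\x$ away from $\w$, for sufficiently small $\eps > 0$. The point $\x'$ avoids all the facet-defining hyperplanes of every $\Po_i$ (by general position of $\w$ and smallness of $\eps$), hence lies in the interior of a unique $\Po_i$. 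I would then claim that $\x \in \hopen_\w \Po_i$ for precisely this index $i$, and for no other.

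First I would verify that $\x \in \Po_i$: since $\x'$ is interior to $\Po_i$ and $\x$ lies on the segment from $\w$ (which is in $\Po$) toward $\x'$, convexity places $\x$ in $\Po_i$. Next I would check that $\x$ is removed from none of the retained facets, i.e.\ that $\x \notin F$ for any facet $F \subset \Po_i$ that $\w$ is beyond. If $\x$ did lie on such an $F$, then moving from $\x$ to $\x'$ in the direction away from $\w$ would cross $\aff(F)$ to the side of $\w$ — but $\w$ is beyond $F$, meaning $\aff(F)$ separates $\w$ from $\Int(\Po_i)$, contradicting that $\x'$ is interior to $\Po_i$. Hence $\x \in \hopen_\w \Po_i$. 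For uniqueness, I would observe that if $\x \in \hopen_\w \Po_j$ for some $j \neq i$, then in particular $\x \in \Po_j$, so $\x$ lies on a common boundary facet of $\Po_i$ and $\Po_j$; a short case analysis on which side of that separating hyperplane $\w$ lies shows that exactly one of $\Po_i, \Po_j$ has $\w$ beyond the shared facet, so $\x$ is deleted from exactly one of the two half-open pieces, giving the disjointness.

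The main obstacle I expect is the careful bookkeeping in the uniqueness argument, precisely because $\x$ may lie on the boundary of several pieces simultaneously (when $\x$ sits on a lower-dimensional face shared by many $\Po_i$); one must argue that among all pieces containing $\x$, exactly one retains it. The cleanest way around this is the ray argument above: the perturbed point $\x'$ is interior to a \emph{unique} piece by general position, and that piece is the unique one retaining $\x$, so I would lean entirely on $\x'$ rather than enumerate faces directly. Finally, the $h^*$-statement follows formally: a disjoint decomposition into half-open lattice polytopes induces, by counting lattice points in each dilate $k\Po$, the identity $E(\Po,k) = \sum_i E(\hopen_\w \Po_i, k)$ for all $k \ge 0$, where $E$ of a half-open polytope counts its lattice points; multiplying by $(1-z)^{n+1}$ and summing over $k$ then yields the additivity of $h^*$-polynomials, using that each half-open piece and $\Po$ share the same ambient dimension $n$.
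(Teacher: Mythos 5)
Your overall plan (a ray/perturbation argument for the disjoint set-theoretic decomposition, then the formal Ehrhart-series step) is the right kind of argument --- the paper itself gives no proof but cites K\"oppe--Verdoolaege, whose proof is of exactly this type. However, your perturbation points in the \emph{wrong direction}, and this breaks the core of the proof. Recall that $\w$ is beyond a facet $F \subset \Po_i$ when $\aff(F)$ separates $\w$ from $\Int(\Po_i)$. Write $F = \Po_i \cap \{\langle a,\cdot\rangle = b\}$ with $\Po_i \subseteq \{\langle a,\cdot\rangle \le b\}$, so ``$\w$ beyond $F$'' means $\langle a,\w\rangle > b$. If $\x \in F$, then your shadow point $\x' = \x + \eps(\x-\w)$ satisfies $\langle a,\x'\rangle = b + \eps\bigl(b - \langle a,\w\rangle\bigr) < b$: moving \emph{away} from $\w$ crosses $\aff(F)$ to the side \emph{opposite} $\w$, which is precisely the side of $\Int(\Po_i)$ --- not ``the side of $\w$'' as you claim. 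So your exclusion step produces no contradiction, and in fact your rule assigns $\x$ to the piece from which the facets that $\w$ is \emph{not} beyond have been deleted, i.e.\ to the complementary half-open convention. A concrete failure: $\Po = [0,2]$, $\Po_1 = [0,1]$, $\Po_2 = [1,2]$, $\w = 1/2$, $\x = 1$. Then $\x' = 1 + \eps/2 \in \Int(\Po_2)$, so your rule places $\x$ in the piece of $\Po_2$; but $\hopen_\w\Po_1 = [0,1]$ and $\hopen_\w\Po_2 = (1,2]$, so $\x$ actually belongs to $\hopen_\w\Po_1$. A second symptom of the same reversal: for $\x \in \partial\Po$ the point $\x'$ lies outside $\Po$ altogether (since $\w \in \Int(\Po)$ and $\Po$ is convex), so it is interior to no $\Po_i$ and your rule assigns such $\x$ to nothing. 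Indeed, your construction decomposes $\Int(\Po)$, not $\Po$ --- which is exactly what the complementary half-open pieces do.

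The repair is to perturb \emph{toward} $\w$: set $\x_\delta = \x + \delta(\w-\x)$. By convexity $\x_\delta \in \Int(\Po)$ for $0 < \delta \le 1$; since $\w$ avoids every facet hyperplane, the segment $[\x,\w]$ meets each such hyperplane at most once, so for all sufficiently small $\delta>0$ the point $\x_\delta$ avoids them all and lies in $\Int(\Po_i)$ for a unique index $i$ independent of $\delta$. The sign computation above, run in this direction, shows that $\x$ lies on no facet of $\Po_i$ that $\w$ is beyond (otherwise $\x_\delta$ would violate that inequality), so $\x \in \hopen_\w\Po_i$; and for any other $j$ with $\x \in \Po_j$, the segment toward $\w$ leaves $\Po_j$ immediately through a facet containing $\x$ that $\w$ \emph{is} beyond, so $\x \notin \hopen_\w\Po_j$. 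One smaller point: your justification that $\x \in \Po_i$ ``by convexity, since $\w \in \Po$'' is not valid as written, because $\w$ need not lie in $\Po_i$; simply use that $\x = \lim_{\delta \to 0}\x_\delta$ and $\Po_i$ is closed. Your final step --- additivity of lattice-point counts over all dilates and multiplication by $(1-z)^{n+1}$ --- is fine once the corrected decomposition is in place.
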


Since alcoved polytopes are invariant under lattice translations and
coordinate permutations, we choose a suitable embedding before computing the
$h^*$-vector.

\begin{prop}
    Let $\Po \subset \R^n$ be a full-dimensional alcoved polytope. Then there
    is a lattice translation and a relabeling of coordinates such that
    $\Delta_{\rm{id}} \subseteq \Po \subset \R^n_{\ge0}$.
\end{prop}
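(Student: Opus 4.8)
The plan is to move $\Po$ into the nonnegative orthant and then rotate coordinates so that the alcove sitting in the ``bottom corner'' of $\Po$ is exactly $\Delta_{\mathrm{id}}$. Both admissible operations cooperate with this goal: a lattice translation and a relabeling of coordinates each preserve alcovedness and the lattice $\Z^n$, and a relabeling additionally fixes $\0$ and preserves $\R^n_{\ge0}$. So I am free to translate first and relabel afterwards without spoiling the orthant containment.

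First I would realize $\Po \subseteq \R^n_{\ge0}$ with a vertex at the origin. Using the inequality description of alcoved polytopes of Lam--Postnikov~\cite{LP}, write $\Po = \{ x : x_i - x_j \le c_{ij} \text{ for } 0 \le i,j \le n \}$ with $x_0 = 0$ and $c_{ij} \in \Z$. A short case analysis on the four combinations of which of $x,y$ realizes each coordinate minimum shows that such a polytope is closed under coordinatewise minima. Hence the point $m$ defined by $m_i = \min_{x \in \Po} x_i$ lies in $\Po$; it is a lattice point, and as a coordinatewise minimum it is a vertex (if $m=\tfrac12(p+q)$ with $p,q\in\Po$, then $p_i,q_i\ge m_i$ with average $m_i$ forces $p=q=m$). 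Translating by $-m$, I may assume $\0$ is a vertex of $\Po$ and $\Po \subseteq \R^n_{\ge0}$.

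Next I would analyze the tangent cone $T$ of $\Po$ at $\0$. Since $\Po \subseteq \R^n_{\ge0}$ and $\0\in\Po$, every feasible direction is nonnegative, so $T \subseteq \R^n_{\ge0}$. Because $\Po$ is alcoved, each facet hyperplane has the form $x_i - x_j = c$ (with $x_0=0$ giving $x_i=c$), and those passing through $\0$ are exactly the hyperplanes $\{x_i = x_j\}$ and $\{x_i = 0\}$; therefore $T$ is a union of closed regions of this arrangement lying in $\R^n_{\ge0}$. These regions are precisely the cones $K_\tau := \{ x : 0 \le x_{\tauinv(1)} \le \cdots \le x_{\tauinv(n)} \}$ for $\tau \in \SymGrp_n$, one per ordering of the nonnegative coordinates; each $K_\tau$ is the tangent cone of $\Delta_\tau$ at $\0$, and they tile $\R^n_{\ge0}$. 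As $\Po$ is full-dimensional and $\0$ is a vertex, $T$ is full-dimensional and hence contains at least one whole cone $K_{\tau_0}$.

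Finally, the coordinate permutation $x \mapsto (x_{\tau_0^{-1}(1)}, \dots, x_{\tau_0^{-1}(n)})$ carries $K_{\tau_0}$ to $K_{\mathrm{id}}$ and $\Delta_{\tau_0}$ to $\Delta_{\mathrm{id}}$ while fixing $\0$ and preserving $\R^n_{\ge0}$. After applying it we have $K_{\mathrm{id}} \subseteq T$ and still $\Po \subseteq \R^n_{\ge0}$. Since $K_{\mathrm{id}}\subseteq T$ is full-dimensional, some point of $\Int(\Delta_{\mathrm{id}})$ lies in $\Int(\Po)$, and because $\Po$ is a union of alcoves with pairwise disjoint interiors, the alcove $\Delta_{\mathrm{id}}$ must itself be one of them; thus $\Delta_{\mathrm{id}} \subseteq \Po \subseteq \R^n_{\ge0}$. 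The main obstacle is the second step: establishing that the coordinatewise minimum is genuinely attained in $\Po$ (equivalently, that $T$ is a full union of the $K_\tau$). This is exactly where the special inequality structure of alcoved polytopes is needed, and it would not follow from the bare ``union of alcoves'' definition.
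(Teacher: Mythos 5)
Your proof is correct, but it follows a genuinely different route from the paper's. The paper argues purely from the union-of-alcoves definition by optimization: for any linear functional $\ell$ strictly positive on $\R^n_{\ge 0}\setminus\{\0\}$, the unique minimizer of $\ell$ over an alcove $\q+\Delta_\tau$ is its corner $\q$; hence the minimizer over the finite union $\Po$ is unique and is the corner of one of the alcoves of the dissection, and since the set of admissible functionals $\ell$ is connected, one and the same corner $\q_0$ minimizes all of them simultaneously. This yields $\Po\subseteq\q_0+\R^n_{\ge 0}$ and, for free, an alcove $\q_0+\Delta_\tau\subseteq\Po$ based at $\q_0$, so a lattice translation and a relabeling finish the proof. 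You instead import the inequality description of alcoved polytopes from \cite{LP}, prove closure under coordinatewise minima (so that $\Po$ has a unique ``bottom'' vertex $m$), and then run a tangent-cone analysis at $m$ to locate an alcove inside $\Po$. Both arguments identify the same distinguished vertex; yours establishes the additional structural fact that alcoved polytopes are min-closed, at the price of invoking the equivalence of the two definitions of alcovedness and of a longer final step --- the paper gets the alcove at the corner for free, since its minimizer is by construction the corner of an alcove of the dissection. Two small points: your closing remark that the orthant containment ``would not follow from the bare union-of-alcoves definition'' is disproved by the paper's own proof, which derives it from exactly that definition (plus convexity); and you should justify that $m\in\Z^n$ --- for instance, any vertex of $\Po$ is an extreme point of an alcove containing it, hence a lattice point --- since otherwise translating by $-m$ is not a lattice translation.
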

\begin{proof}
    Let $\ell(\x)$ be any linear function such that $\ell(\p) > 0$ for all $\p
    \in \R^n_{\ge 0}$ with $\p \neq 0$. Then $\q$ is the unique minimizer of
    $\ell(\x)$ over $\q + \Delta_\tau$ for all $\q$ and $\tau$. Since $\Po$ can be dissected into
    finitely many alcoves, this shows that there is a unique $\q \in \Po$ that
    minimizes $\ell(\x)$ over $\Po$. Since this holds for all such linear
    functions, this shows that there is a point $\q_0 \in \Po$ that minimizes
    all $\ell(\x)$ and hence $\Po \subseteq \q + \R^n_{\ge 0}$. 
    Thus, $\Delta_\tau \subseteq \Po - \q \subset \R^n_{\ge 0}$ for some
    $\tau$ and  relabeling the coordinates finishes the proof.
\end{proof}

Making use of the previous result, we may compute the half-open decomposition
with respect to the point $\w = \frac{1}{n+1}(1,2,\dots,n)$.  With these
conventions, we are  now ready to compute the $h^*$-polynomial of Lipschitz
polytopes. For a naturally labeled poset $\P$ and a descent-compatible
permutation $\tau$, let us define $\stat_\P(\tau)$ as the number of $i \in
\{0,\ldots,n-1\}$ such that
\begin{align}\label{eq.defstatAlt}
  \desP(\tauinv(i))  <   \desP(\tauinv(i+1)) &\quad \text{ or } \quad
  \big( \desP(\tauinv(i)) = \desP(\tauinv(i+1)) \text{ and }i \in
  \iDes(\tau) \big)\\
\intertext{or, equivalently, as the number of pairs $(a,b) \in \Phat \times \Phat$ with $\tau(a)
= \tau(b) - 1$ and}
  \desP(a) \ < \ \desP(b) &\quad \text{ or } \quad
  \bigl( \desP(a) \ = \ \desP(b) \quad \text{and} \quad a > b
  \bigr)\,.
\label{eq.defstat}
\end{align}
The equivalence of the two descriptions is easily seen by considering $i = \tau(a)$ and $i+1 = \tau(b)$.

As $\desP$ is by definition weakly increasing along chains in~$\P$, the
subsets of~$\P$ for which $\desP$ is constant partitions $\P$ into
\emph{layers}. The description of $\stat_\P(\tau)$ can then be easily read off
the poset labeled by~$\tau$. The following example illustrates this.
\begin{example}
  We consider the following naturally labeled poset $\P$ on $\{1,\ldots,9\}$,
  and the permutation $\tau = 423716598 \in \DC(\P)$.  The image of $\tau$ is
  given in big (black) while the natural labelling is given in small (blue).
  Moreover, we indicated the (boundaries of the) layers of $\q^\tau =
  (0,1,1,1,2,1,1,2,2)$ for $q^\tau_a = \desP(a)$ in red.

  \begin{center}
    \begin{tikzpicture}[scale=1]
      \node[label={ [label={[xshift=0.15cm, yshift=-0.5cm]\color{blue}\tiny$1$}]}] (4) at ( 0,0) {$4$};
      \node[label={ [label={[xshift=0.15cm, yshift=-0.5cm]\color{blue}\tiny$2$}]}] (2) at (-1,1) {$2$};
      \node[label={ [label={[xshift=0.15cm, yshift=-0.5cm]\color{blue}\tiny$3$}]}] (3) at ( 1,1) {$3$};
      \node[label={ [label={[xshift=0.15cm, yshift=-0.5cm]\color{blue}\tiny$6$}]}] (6) at (-2,2) {$6$};
      \node[label={ [label={[xshift=0.15cm, yshift=-0.5cm]\color{blue}\tiny$5$}]}] (1) at ( 0,2) {$1$};
      \node[label={ [label={[xshift=0.15cm, yshift=-0.5cm]\color{blue}\tiny$4$}]}] (7) at ( 2,2) {$7$};
      \node[label={ [label={[xshift=0.15cm, yshift=-0.5cm]\color{blue}\tiny$7$}]}] (5) at (-1,3) {$5$};
      \node[label={ [label={[xshift=0.15cm, yshift=-0.5cm]\color{blue}\tiny$8$}]}] (9) at ( 1,3) {$9$};
      \node[label={ [label={[xshift=0.15cm, yshift=-0.5cm]\color{blue}\tiny$9$}]}] (8) at ( 0,4) {$8$};
      \draw (4) -- (2) -- (6) -- (5) -- (8);
      \draw (4) -- (3) -- (7) -- (9) -- (8);
      \draw (2) -- (1) -- (5);
      \draw (3) -- (1);

      \draw [red, thick] plot [smooth] coordinates {(-1,0) (0,.9) (1,0)};
      \draw [red, thick] plot [smooth] coordinates {(-3,2) (-2,3) (-1,2) (0,1.1) (1,2) (0,3) (1,4)};
    \end{tikzpicture}
  \end{center}
  Finally,
  \begin{align*}
    \big| \{ i &\newmid \desP(\tauinv(i)) < \desP(\tauinv(i+1)) \}\big| \ = \ \big| \{4,7 \} \big| \ = \ 2, \\
    \big| \{ i &\newmid \desP(\tauinv(i)) = \desP(\tauinv(i+1)), \  i \in \iDes(\tau) \}\big| \ = \ \big| \{6 \} \big| \ = \ 1,
  \end{align*}
  or, equivalently,
  \begin{align*}
    \big| \{(a,b) &\newmid \desP(a) < \desP(b), \quad
    \tau(a) = \tau(b)-1 \}\big| \ = \ \big| \{(1,7), (4,9) \}\big| \ = \ 2, \\
    \big| \{(a,b) &\newmid \desP(a) = \desP(b), \quad
    \tau(a) = \tau(b)-1, \quad a > b \}\big| \ = \ \big| \{(6,4) \}\big| \ = \ 1.
  \end{align*}
  We thus obtain $\stat_\P(\tau) = 3$.
\end{example}

\begin{thm}\label{thm:Lip-Hstar}
    Let $(\P,\preceq)$ be a naturally labeled poset.  Then
    \[
      h^*\big(\Lip(\P),z\big) \ = \ \GenFun_{\stat}(\P,z) \ \eqdef \ \sum_{\tau \in
      \DC(\P)} z^{\stat_P(\tau)} \,.
    \]
\end{thm}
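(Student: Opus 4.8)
The plan is to combine the unimodular triangulation of \Cref{thm:triang} with the half-open decomposition of \Cref{lem:HO}. By \Cref{thm:triang}, the alcoves of $\Lip(\P)$ are exactly the simplices $\sigma_\tau := \q^\tau + \Delta_\tau$ for $\tau \in \DC(\P)$, where $q^\tau_a = \desP(a)$; these form a unimodular dissection of $\Lip(\P)$. I would run the decomposition at the prescribed generic point $\w = \frac{1}{n+1}(1,2,\dots,n)$, together with the convention $w_0 = 0$. First one checks that $\w \in \Int(\Lip(\P))$ and that $\w$ avoids every facet hyperplane of every $\sigma_\tau$: the coordinates $w_i = \frac{i}{n+1}$ and the differences $w_i - w_j$ are non-integral, so $\w$ lies on none of the braid-arrangement hyperplanes. \Cref{lem:HO} then gives $h^*(\Lip(\P),z) = \sum_{\tau \in \DC(\P)} h^*(\hopen_\w \sigma_\tau, z)$.

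Next I would invoke the standard fact that the $h^*$-polynomial of a half-open unimodular simplex obtained by deleting $j$ of its facets is the single monomial $z^j$ (easily verified by a generating-function computation on the fundamental parallelepiped, whose unique lattice point sits at height equal to the number of removed facets). Hence $h^*(\hopen_\w \sigma_\tau, z) = z^{j(\tau)}$, where $j(\tau)$ is the number of facets of $\sigma_\tau$ beyond which $\w$ lies, and the theorem reduces to the identity $j(\tau) = \stat_P(\tau)$.

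The heart of the argument is therefore the facet count. Writing $z_i := x_{\tauinv(i)} - \desP(\tauinv(i))$ with the conventions $z_0 = 0$ and $z_{n+1} = 1$, we have $\sigma_\tau = \{0 = z_0 \le z_1 \le \cdots \le z_n \le z_{n+1} = 1\}$, an $n$-simplex whose $n+1$ facets $F_0,\dots,F_n$ are cut out by the inequalities $z_i \le z_{i+1}$. By definition $\w$ is beyond $F_i$ iff $z_i(\w) > z_{i+1}(\w)$, which, writing $a = \tauinv(i)$ and $b = \tauinv(i+1)$ (so $\tau(a) = \tau(b)-1$), rearranges to $\desP(b) - \desP(a) > \frac{b-a}{n+1}$. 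Since $\desP$ is integer-valued and $\frac{b-a}{n+1} \in (-1,1)$, this holds exactly when $\desP(a) < \desP(b)$, or when $\desP(a) = \desP(b)$ and $a > b$ — precisely the membership condition of \eqref{eq.defstat}. For the top facet $F_n$ (the inequality $z_n \le 1$) one gets $\frac{\tauinv(n)}{n+1} - \desP(\tauinv(n)) > 1$, which is impossible since the left-hand side is below $1$; thus $\w$ is never beyond $F_n$, matching the fact that $\stat_P$ ranges only over the $n$ pairs $(a,b)$ with $\tau(a)=\tau(b)-1$, i.e.\ over $F_0,\dots,F_{n-1}$. Hence $j(\tau) = \stat_P(\tau)$, and summing over $\tau \in \DC(\P)$ yields the claim.

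I expect the main obstacle to be the careful bookkeeping in this facet analysis: correctly translating ``$\w$ is beyond $F$'' into the inequality on $\desP$, and confirming that the two boundary facets behave as the definition dictates — the bottom facet $F_0$, where $a = \Pbot$ and $\desP(\Pbot)=0$, and the top facet $F_n$, which must be excluded since it is never beyond (this is also what keeps $\deg h^* \le n$). The only external input is the monomial form of the $h^*$-polynomial of a half-open unimodular simplex, which I would either cite or dispatch with the short parallelepiped computation mentioned above.
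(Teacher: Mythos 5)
Your proposal is correct and takes essentially the same route as the paper: the alcove triangulation from \Cref{thm:triang}, the half-open decomposition of \Cref{lem:HO} at $\w = \tfrac{1}{n+1}(1,2,\dots,n)$, and a facet-by-facet check of which inequalities of $\q^\tau + \Delta_\tau$ the point $\w$ violates, which is exactly the content of the paper's \Cref{lem:HO-alcove}. The only cosmetic difference is that you phrase the beyond-facet condition directly in terms of the pairs $(a,b)$ with $\tau(a) = \tau(b)-1$, landing immediately on \eqref{eq.defstat}, whereas the paper records it via the sets $A(\tau,\q)$ and $D(\tau,\q)$ indexed by ascents and descents of $\tauinv$ and then matches $|A|+|D|$ with $\stat_\P(\tau)$.
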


In order to prove \Cref{thm:Lip-Hstar}, we extract the main technical tool
into the following lemma.

\begin{lem}\label{lem:HO-alcove}
    Let $\tau \in \SymGrp_n$ and $\q = (q_1,\ldots,q_n) \in \Z^n_{\ge 0}$.
    Then
    \[
        h^*(\hopen_\w(\q + \Delta_\tau),z)  \ = \ z^{|A(\tau,\q)| +
        |D(\tau,\q)|} \, ,
    \]
    where
    \begin{align*}
        A(\tau,\q) \ \eqdef \ \big\{ 0 \le i < n : \tauinv(i) < \tauinv(i+1)
        \quad \text{ and } \quad & q_{\tauinv(i)} < q_{\tauinv(i+1)} \big\}
        \ \text{
        and }\\
        D(\tau,\q) \ \eqdef \ \big\{ 0 \le i < n : \tauinv(i) > \tauinv(i+1)
        \quad \text{ and } \quad & q_{\tauinv(i)} \le q_{\tauinv(i+1)} \big\}\,.
    \end{align*}
\end{lem}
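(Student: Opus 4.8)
\emph{Plan.} The proof rests on two independent facts that I would establish and then combine. The first is purely Ehrhart-theoretic: the $h^*$-polynomial of \emph{any} half-open unimodular simplex is the monomial $z^{j}$, where $j$ is the number of deleted facets. The second is a facet-by-facet bookkeeping that identifies exactly which facets of $\q+\Delta_\tau$ the chosen point $\w=\tfrac{1}{n+1}(1,2,\dots,n)$ lies beyond; I claim these are precisely the facets indexed by $A(\tau,\q)\cup D(\tau,\q)$, from which the lemma follows.

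For the first fact I would homogenize. Writing the vertices of $\q+\Delta_\tau$ as $v_0,\dots,v_n$, I lift them to $\hat v_i\eqdef(1,v_i)\in\R^{n+1}$ and consider the cone $C$ they generate. Unimodularity of $\Delta_\tau$ means $\hat v_0,\dots,\hat v_n$ form a lattice basis of $\Z^{n+1}$, so the fundamental parallelepiped of $C$ contains a single lattice point at height $0$, giving $h^*=1$ for the closed simplex. Deleting the facet opposite $v_l$ switches the half-open condition on the $l$-th barycentric coordinate from $\lambda_l\in[0,1)$ to $\lambda_l\in(0,1]$. Hence, if $J\subseteq\{0,\dots,n\}$ indexes the deleted facets, the unique lattice point in the corresponding half-open parallelepiped is $\sum_{l\in J}\hat v_l$, whose height is $|J|$; therefore $h^*(\hopen_\w(\q+\Delta_\tau),z)=z^{|J|}$.

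It remains to compute $J$. Recall that $\w$ equals the barycenter $\c^{\mathrm{id}}$ of the base alcove $\Delta_{\mathrm{id}}$, and use the conventions $\tauinv(0)=0$ and $w_0=q_0=0$, so that $w_j=\tfrac{j}{n+1}$. The facets of $\q+\Delta_\tau$ are the translates of $\{x_{\tauinv(i)}=x_{\tauinv(i+1)}\}$ for $0\le i<n$ together with $\{x_{\tauinv(n)}=1\}$. For a facet of the first kind, $\w$ lies beyond it exactly when
\[
    w_{\tauinv(i+1)}-w_{\tauinv(i)}\ <\ q_{\tauinv(i+1)}-q_{\tauinv(i)} .
\]
Since the left-hand side equals $\tfrac{\tauinv(i+1)-\tauinv(i)}{n+1}\in(-1,1)$ and the right-hand side is an integer, this is equivalent to $q_{\tauinv(i)}<q_{\tauinv(i+1)}$ when $\tauinv(i)<\tauinv(i+1)$, and to $q_{\tauinv(i)}\le q_{\tauinv(i+1)}$ when $\tauinv(i)>\tauinv(i+1)$. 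These two cases are exactly the membership conditions for $A(\tau,\q)$ and $D(\tau,\q)$, which are disjoint, so together they account for $|A(\tau,\q)|+|D(\tau,\q)|$ of the deleted facets.

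The only remaining facet, $\{x_{\tauinv(n)}=1\}$, is where the hypothesis $\q\in\Z^n_{\ge0}$ is used, and this is the step I expect to require the most care. Here $\w$ is beyond the facet iff $w_{\tauinv(n)}>1+q_{\tauinv(n)}$; but $w_{\tauinv(n)}=\tfrac{\tauinv(n)}{n+1}<1\le 1+q_{\tauinv(n)}$, so $\w$ is never beyond it and this facet contributes nothing to $J$. Consequently $J=A(\tau,\q)\sqcup D(\tau,\q)$ and $|J|=|A(\tau,\q)|+|D(\tau,\q)|$, which combined with the first fact proves the lemma. A subtlety worth double-checking throughout is that the fractional displacements $\tfrac{\tauinv(i+1)-\tauinv(i)}{n+1}$ are never integers, guaranteeing that $\w$ is in general position and that each strict or weak inequality above is the correct one.
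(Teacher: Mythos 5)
Your proof is correct and follows essentially the same route as the paper: the core of both arguments is the identical facet-by-facet computation showing that $\w$ lies beyond precisely the facets indexed by $A(\tau,\q) \cup D(\tau,\q)$, with the facet $\{x_{\tauinv(n)} = 1\}$ never removed thanks to $\q \in \Z^n_{\ge 0}$. The only difference is that you explicitly justify, via the lifted-cone/fundamental-parallelepiped argument, the standard fact that a half-open unimodular simplex with $j$ facets removed has $h^*$-polynomial $z^j$, a fact the paper uses implicitly.
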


\begin{proof}
    We determine the linear inequalities of $\q + \Delta_{\tau}$ that are
    violated for $\w$. First observe that 
    \[
        (\w - \q)_{\tauinv(n)}  \eq\tfrac{\tauinv(n)}{n+1} -
        q_{\tauinv(n)} \ \le \ 1
    \]
    since $q_j \ge 0$ for all $j$. Hence, the last inequality of $
    \Delta_{\tau}$ is always satisfied for $\w-\q$. For $0 \le i < n$,
    we have
    \[
        (\w - \q)_{\tauinv(i)} >
        (\w - \q)_{\tauinv(i+1)}
        \quad \Longleftrightarrow \quad
        q_{\tauinv(i+1)} +
        \underbrace{\tfrac{\tauinv(i)-\tauinv(i+1)}{n+1}}_{=:\delta}
        \ > \
        q_{\tauinv(i)}.
    \]
    If $i$ is a descent of $\tauinv$, then $1 > \delta > 0$ and the inequality
    holds if and only if $q_{\tauinv(i+1)} \ge q_{\tauinv(i)}$. Otherwise,
    $i$ is an ascent of $\tauinv$ and $-1 < \delta < 0$ and the inequality
    holds if and only if $q_{\tauinv(i+1)} > q_{\tauinv(i)}$.
\end{proof}

\begin{proof}[Proof of \Cref{thm:Lip-Hstar}]
    By \Cref{thm:triang}, the simplices of the alcove triangulation of
    $\Lip(\P)$ are given by $\q^\tau + \Delta_\tau$ for $\tau \in \DC(\P)$ and
    $q^\tau_a = \desP(a)$ for all $a \in \P$. In particular, if $\P$
    is naturally labeled, then $\Delta_{\mathrm{id}} \subseteq \Lip(\P)
    \subseteq \R^n_{\ge0}$ and we can apply \Cref{lem:HO-alcove} to
    $\hopen_\w(\q^\tau + \Delta_\tau)$ and it is easy to see that
    $\stat_\P(\tau) = |A(\tau,\q^\tau)| + |D(\tau,\q^\tau)|$.  \Cref{lem:HO}
    then completes the proof.
\end{proof}

If $\P$ is a chain on $n$ elements, then $\DC(\P) = \SymGrp_n$ and the
statistic given in \Cref{thm:Lip-Hstar} reduces to a known permutation
statistic. For a permutation $\tau \in \SymGrp_n$, a \Defn{big ascent} (or \Defn{$\boldsymbol
2$-ascent}) is an index $0 \le i < n$ such that $\tau(i+1)
- \tau(i) \ge 2$. In particular, $i=0$ is a big ascent if and only if $\tau(1)
> 1$. We record the number of big ascents by $\bigasc(\tau)$, and set $\bigiasc(\tau) \eqdef \bigasc(\tauinv)$ and
\[
    A^{(2)}_{n}(z) \ = \ \sum_{i=0}^n A^{(2)}(n,i) z^i \ \eqdef \ \sum_{\tau \in
    \SymGrp_n} z^{\bigiasc(\tau)} \ = \ \sum_{\tau \in
    \SymGrp_n} z^{\bigasc(\tau)}\,.
\]

Big ascents and big descents appeared in the literature before, we refer to~\cite{MR3096140} for several identities involving big descents,
compare also~\cite[Lem.~6.5]{NanLi} for later reference.

\begin{thm}\label{thm:LipLin}
    Let $\P$ be a rooted tree on $n$ elements. Then
    \[
        \GenFun_{\stat}(\P,z) \ = \ A^{(2)}_n(z) \, .
   \] 
\end{thm}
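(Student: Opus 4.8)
The plan is to combine two results already established in the excerpt. By \Cref{prop:rootedtrees}, a connected poset $\P$ satisfies $\DC(\P) = \SymGrp_n$ precisely when $\P$ is a rooted tree, so for a rooted tree the sum defining $\GenFun_{\stat}(\P,z)$ ranges over \emph{all} of $\SymGrp_n$. Thus the content of the theorem is entirely about identifying the statistic $\stat_\P(\tau)$ with $\bigiasc(\tau)$ (equivalently $\bigasc$, after passing to inverses) for every $\tau \in \SymGrp_n$. Since $A^{(2)}_n(z) = \sum_{\tau} z^{\bigiasc(\tau)}$, it suffices to show that $\stat_\P(\tau) = \bigiasc(\tau)$ for each $\tau$, and \Cref{thm:Lip-Hstar} then yields the claimed equality of polynomials.

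To carry this out, I would first pin down the layer function $\desP$ for a rooted tree. Because a rooted tree has a \emph{unique} saturated chain from $\Pbot$ to any $b \in \P$, the quantity $\desP(b)$ counts the descents of $\tau$ along that unique chain $\Pbot = c_1 \cov \cdots \cov c_k = b$. The key structural observation is that if $\bar b$ is the unique parent of $b$ in $\Phat$, then the chain to $b$ is the chain to $\bar b$ extended by one step, so
\[
    \desP(b) \ = \ \desP(\bar b) +
    \begin{cases} 1 & \text{if } \tau(\bar b) > \tau(b),\\ 0 & \text{if } \tau(\bar b) < \tau(b),\end{cases}
\]
with the convention $\tau(\Pbot) = 0$ so that $\desP$ of a minimal element is $0$ or... in fact always $0$ here since $\tau(\Pbot)=0 < \tau(b)$. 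This recursion is exactly the transformation $T$ from \Cref{prop:rooted_cube} at the combinatorial level, and it is the heart of the argument.

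Next I would unwind the definition \eqref{eq.defstat} of $\stat_\P(\tau)$, which counts pairs $(a,b)$ with $\tau(a) = \tau(b)-1$ and either $\desP(a) < \desP(b)$, or $\desP(a) = \desP(b)$ with $a > b$. Fixing a value $i$ and writing $a = \tauinv(i)$, $b = \tauinv(i+1)$, I want to show this condition on the pair is equivalent to $i$ being a big ascent of $\tauinv$, i.e.\ $\tauinv(i+1) - \tauinv(i) \ge 2$, which would give $\stat_\P(\tau) = \bigasc(\tauinv) = \bigiasc(\tau)$. The mechanism should be that along the tree, whether $\desP(a)$ and $\desP(b)$ differ, or agree in a way forcing the secondary $a>b$ comparison, is controlled by how the consecutive values $i, i+1$ sit relative to each other and to the tree's parent relation; the big-ascent condition $\tauinv(i+1)-\tauinv(i)\ge 2$ is what captures exactly these cases.

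The main obstacle I anticipate is verifying this equivalence cleanly and \emph{uniformly across all rooted trees}, rather than just for a chain. For the chain the statement is essentially the classical identification of the Lipschitz $h^*$-statistic with big ascents, but for a general rooted tree the elements $a,b$ with consecutive $\tau$-values need not be comparable in $\P$, so one cannot directly compare their chains. The clean way around this is to invoke \Cref{prop:rooted_cube}: the lattice isomorphism $T$ carries $\Lip(\P)$ to the cube $[0,1]^\P$, and the cube's $h^*$-polynomial is the Eulerian-type generating function whose statistic is precisely $\bigasc$. So an alternative, and I think safer, route is to argue that $T$ induces a lattice-preserving map respecting the relevant unimodular triangulations, whence $h^*(\Lip(\P),z) = h^*([0,1]^n,z) = A^{(2)}_n(z)$ directly; the combinatorial identification of $\stat_\P$ with $\bigiasc$ then follows a posteriori. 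I would present the direct statistic-matching as the primary argument and keep the $T$-to-cube reduction as the conceptual backbone that guarantees it must work out.
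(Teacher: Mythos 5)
Your opening reduction (via \Cref{prop:rootedtrees}, the sum runs over all of $\SymGrp_n$) is correct and is also how the paper starts, but your primary route rests on a claim that is false: for a general rooted tree the \emph{pointwise} identity $\stat_\P(\tau) = \bigiasc(\tau)$ does not hold --- only the generating functions agree. Concretely, take the short rooted tree on $\{1,2,3\}$ with root $1$ and leaves $2,3$, and $\tau = 213$. The unique chains give $\des_{\P,\tau}(1)=0$, $\des_{\P,\tau}(2)=1$, $\des_{\P,\tau}(3)=0$, and checking the three values $i\in\{0,1,2\}$ in \eqref{eq.defstatAlt} (only $i=0$ contributes) yields $\stat_\P(\tau)=1$, whereas $\bigiasc(\tau) = \bigasc(\tauinv) = \bigasc(213) = 2$, with big ascents at $i=0$ and $i=2$. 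This is consistent with \Cref{ex:ides}, which shows that for short rooted trees $\stat_\P = \ides$, a statistic equidistributed with, but not equal to, $\bigiasc$. So the ``obstacle'' you anticipated is not a technical nuisance but a fatal one: no argument uniform across all rooted trees can establish the pointwise identity, because it only holds for the chain.

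Your fallback route is in fact the skeleton of the paper's proof --- by \Cref{prop:rooted_cube} and \Cref{thm:Lip-Hstar}, $\GenFun_{\stat}(\P,z)=h^*(\Lip(\P),z)$ is a lattice invariant, so all rooted trees on $n$ elements have the same $\GenFun_{\stat}$ and one may assume $\P$ is the $n$-chain --- but it is incomplete at exactly the decisive step. You assert that the cube's $h^*$-polynomial is the generating function ``whose statistic is precisely $\bigasc$''. What is classical is $h^*([0,1]^n,z)=\sum_\tau z^{\des(\tau)}$ (equivalently, applying \Cref{lem:HO-alcove} to the cube's own alcoves, where all $\q=\0$, one gets $\sum_\tau z^{\ides(\tau)}$); identifying this with $A^{(2)}_n(z)$ is precisely the nontrivial equidistribution that the paper \emph{derives} afterwards as \Cref{cor:Hstarcube}, so you cannot quote it without either circularity or importing an external result. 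What is missing is the chain computation the paper actually performs: with $\q^\tau$ as in \Cref{thm:triang}, one shows via \Cref{lem:HO-alcove} that $D(\tau,\q^\tau)=\emptyset$ (if $\tauinv(i)>\tauinv(i+1)$, the word $\tau(\tauinv(i+1))\cdots\tau(\tauinv(i))$ must contain a descent, forcing $q^\tau_{\tauinv(i+1)}<q^\tau_{\tauinv(i)}$), and that $A(\tau,\q^\tau)$ consists exactly of those $i$ with $\tauinv(i+1)\ge\tauinv(i)+2$, since $q^\tau_{\tauinv(i)}=q^\tau_{\tauinv(i+1)}$ happens precisely when $\tauinv(i+1)=\tauinv(i)+1$. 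This gives $\stat_\P(\tau)=\bigiasc(\tau)$ pointwise \emph{for the chain}, and only there; combined with your (correct) lattice-equivalence reduction, that closes the proof.
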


\begin{proof}
    We prove the result only for the case that $\P$ is the chain on~$n$
    elements. By \Cref{prop:rooted_cube}, $\Lip(\P)$ and $\Lip(\P')$ are
    lattice-equivalent whenever $\P$ and $\P'$ are rooted trees on the same
    number of elements and hence $\GenFun_{\stat}(\P,z) =
    \GenFun_{\stat}(\P',z)$.

    From~\Cref{thm:triang} we infer that for any permutation $\tau \in
    \SymGrp_n$, $\q^\tau + \Delta_\tau \subset \Lip(\P)$ for $q^\tau_a =
    \desP(a)$.  We use \Cref{lem:HO-alcove} for the computation.
    Since $P = \{1,\dots,n\}$ is a chain, we observe that that $q^\tau_i \leq
    q^\tau_j$ for $i < j$ and $q_i = q_j$ if and only if there is no descent
    in $\tau(i)\tau(i+1)\dots\tau(j-1)$. 
    
    Now, if $1 \le i < n$ so that $\tauinv(i+1) < \tauinv(i)$, then
    $q^\tau_{\tauinv(i+1)} < q^\tau_{\tauinv(i)}$. Indeed, if $\tauinv(i+1) <
    \tauinv(i)$, then $\tau(\tauinv(i+1)) \tau(\tauinv(i+1)+1) \cdots \tau(\tauinv(i))$ inevitably
    contains a descent. This implies that $D(\tau,\q^\tau) = \emptyset$.  
    
    Otherwise, $\tauinv(i+1) > \tauinv(i)$ and hence $q^\tau_{\tauinv(i)} \leq
    q^\tau_{\tauinv(i+1)}$ with equality if and only if we have $\Des(\tau) \cap
    \{\tauinv(i),\ldots,\tauinv(i+1)-1\} = \emptyset$.  But this is the case
    if and only if $\tauinv(i+1) = \tauinv(i)+1$.  Therefore,
    $q^\tau_{\tauinv(i)} < q^\tau_{\tauinv(i+1)}$ if and only if $\tauinv(i+1)
    \geq \tauinv(i) + 2$.
    This gives $\stat_\P(\tau) = \bigiasc(\tau)$ in the case of the $n$-chain, which yields the statement.
\end{proof}

In light of \Cref{prop:rooted_cube} and the fact that the coefficients of the
$h^*$-polynomial of the cube are given by Eulerian numbers, \Cref{thm:LipLin} implies the
following corollary.

\begin{cor}
\label{cor:Hstarcube}
  Let $\P$ be a rooted tree on $n$ elements. Then
  \[
      \GenFun_{\stat}(\P,z) = A^{(2)}_n(z) = \sum_{\tau \in \SymGrp_n} z^{\ides(\tau)}.
  \] 
\end{cor}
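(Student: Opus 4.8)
The plan is to recognize that all three expressions are the Ehrhart $h^*$-polynomial of the $n$-cube, and then to read off that polynomial in two ways. The first equality $\GenFun_{\stat}(\P,z) = A^{(2)}_n(z)$ is exactly \Cref{thm:LipLin}, so the only remaining content is the identity $A^{(2)}_n(z) = \sum_{\tau \in \SymGrp_n} z^{\ides(\tau)}$, which I would obtain by passing through the cube. By \Cref{thm:Lip-Hstar} we have $\GenFun_{\stat}(\P,z) = h^*(\Lip(\P),z)$, and by \Cref{prop:rooted_cube} the transformation $T$ maps $\Lip(\P)$ onto $[0,1]^\P \cong [0,1]^n$. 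Since $T$ is linear, invertible, and $\Z^\P$-preserving, it is unimodular, and unimodular equivalences leave the Ehrhart polynomial, hence the $h^*$-polynomial, unchanged; therefore $A^{(2)}_n(z) = h^*(\Lip(\P),z) = h^*([0,1]^n,z)$.

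It then remains to evaluate $h^*([0,1]^n,z)$ and to see that it equals $\sum_{\tau} z^{\ides(\tau)}$. I would do this with the paper's own half-open machinery rather than quoting the classical identity. The cube is alcoved, and its alcove triangulation is precisely $[0,1]^n = \bigcup_{\tau \in \SymGrp_n} \Delta_\tau$, since any point of $[0,1]^n$ lies in $\Delta_\tau$ for the unique $\tau$ sorting its coordinates. Each simplex has the form $\0 + \Delta_\tau$, so I can apply \Cref{lem:HO-alcove} with $\q = \0$. With all coordinates of $\q$ equal to $0$, the strict inequality $q_{\tauinv(i)} < q_{\tauinv(i+1)}$ never holds, so $A(\tau,\0) = \emptyset$, while $q_{\tauinv(i)} \le q_{\tauinv(i+1)}$ always holds, so $D(\tau,\0) = \{0 \le i < n : \tauinv(i) > \tauinv(i+1)\} = \iDes(\tau)$. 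Hence $h^*(\hopen_\w(\0 + \Delta_\tau),z) = z^{\ides(\tau)}$, and summing over the triangulation via \Cref{lem:HO} gives $h^*([0,1]^n,z) = \sum_{\tau \in \SymGrp_n} z^{\ides(\tau)}$. Stringing the equalities together proves the corollary.

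This corollary is really an assembly of earlier results, so there is no serious obstacle; the only points requiring a little care are bookkeeping ones. First, one must check that $\w = \frac{1}{n+1}(1,\dots,n)$ lies in the interior of $[0,1]^n$ and is in general position, and that $\Delta_{\mathrm{id}} \subseteq [0,1]^n \subseteq \R^n_{\ge 0}$, so that \Cref{lem:HO} and \Cref{lem:HO-alcove} genuinely apply; all of these are immediate. Second, one should note that the half-open decomposition naturally produces the inverse-descent statistic $\ides$ rather than $\des$. This is consistent with the classical statement that $h^*([0,1]^n,z)$ is the Eulerian polynomial, because $\tau \mapsto \tauinv$ is a bijection of $\SymGrp_n$ with $\ides(\tau) = \des(\tauinv)$, so that $\sum_{\tau} z^{\ides(\tau)} = \sum_{\tau} z^{\des(\tau)}$.
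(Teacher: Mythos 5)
Your proof is correct, and its skeleton coincides with the paper's: the first equality is exactly \Cref{thm:LipLin}, and the second is obtained by identifying $\GenFun_{\stat}(\P,z)$ with $h^*(\Lip(\P),z) = h^*([0,1]^n,z)$ via \Cref{thm:Lip-Hstar} and the unimodular map $T$ of \Cref{prop:rooted_cube}. The one place you diverge is the final step: the paper simply cites the classical fact that the $h^*$-coefficients of the cube are the Eulerian numbers, whereas you re-derive this inside the paper's own framework, triangulating $[0,1]^n$ into the alcoves $\0 + \Delta_\tau$, $\tau \in \SymGrp_n$, and applying \Cref{lem:HO-alcove} with $\q = \0$, which gives $A(\tau,\0) = \emptyset$ and $D(\tau,\0) = \iDes(\tau)$, hence $h^*\bigl(\hopen_\w \Delta_\tau, z\bigr) = z^{\ides(\tau)}$, and summing via \Cref{lem:HO} yields $\sum_{\tau} z^{\ides(\tau)}$. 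Your computation checks out, including the bookkeeping: $\w$ lies in the interior of the cube and in general position, and every alcove $\q + \Delta_\tau$ contained in $[0,1]^n$ necessarily has $\q = \0$, so the dissection you feed into \Cref{lem:HO} is the full alcove triangulation. What your variant buys is self-containedness, and it makes transparent why the statistic appears naturally in its $\ides$ form — the passage to $\des$ via $\tau \mapsto \tauinv$ is needed only to match the classical phrasing, exactly as you note. The paper's version is shorter but leans on an external citation; yours shows that \Cref{lem:HO-alcove}, specialized at $\q = \0$, is precisely the classical computation of the cube's $h^*$-polynomial.
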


We illustrate \Cref{thm:Lip-Hstar} with the computation of the
$h^*$-polynomial for the Lipschitz polytope of short rooted and hanging trees.

\begin{example}\label{ex:ides}
    Using the description of $\stat_\P$ given in~\eqref{eq.defstatAlt} in the
    case of the short rooted and hanging trees from \Cref{ex:shorttrees}, the
    following yields
    \[
      \stat_\P(\tau) = \ides(\tau) \quad \text{for} \quad \tau \in \DC(\P)
    \]
    in these two cases.  For the short rooted tree we have
  \begin{align*}
    \big\{ i &\newmid \desP(\tauinv(i)) < \desP(\tauinv(i+1)) \big\} &= \quad
      \begin{cases}
        \emptyset\hspace*{88pt} &\text{ if } \tau(1) = 1 \\
        \{ 0 \}   &\text{ if } \tau(1) > 1
      \end{cases}, \\
    \big\{ i &\newmid \desP(a) = \desP(b) \quad i \in \iDes(\tau) \big\} &= \quad 
        \begin{cases} 
          \iDes(\tau) &\text{ if } \tau(1) = 1 \\
          \iDes(\tau) \setminus \{ \tau(1)-1 \}   &\text{ if } \tau(1) > 1
        \end{cases}.
  \end{align*}
  On the other hand, for the short hanging tree, we have $\tau(n) \in \{1,n\}$ and two analogous considerations for these two possibilities.
\end{example}

Even though one easily finds examples of posets $\P$ and permutations $\tau
\in \DC(\P)$ for which $\stat_\P(\tau) \neq \ides(\tau)$, the \emph{generating function search functionality} of the Combinatorial Statistic Finder
\url{www.FindStat.org}~\cite{FS2017} suggests the following conjectural
generalization of \Cref{cor:Hstarcube} to all posets, in agreement with \Cref{ex:ides}.

\begin{conj}
\label{con:findstat}
  Let $\P$ be a poset.  Then
    \[
      \GenFun_{\stat}(\P,z) \ = \ \sum_{\tau \in
      \DC(\P)} z^{\ides(\tau)} \,.
    \]
\end{conj}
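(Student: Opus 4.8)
The plan is to use \Cref{thm:Lip-Hstar} to reduce \Cref{con:findstat} to a purely combinatorial statement: since $\GenFun_{\stat}(\P,z) = \sum_{\tau \in \DC(\P)} z^{\stat_\P(\tau)}$, it suffices to prove that the two statistics $\stat_\P$ and $\ides$ are equidistributed over the set $\DC(\P)$ of descent-compatible permutations. The first step I would take is to locate precisely where the two statistics differ. Grouping the indices $0 \le i < n$ according to whether $\desP(\tauinv(i))$ is smaller than, equal to, or larger than $\desP(\tauinv(i+1))$, one sees that on the equal (same-layer) indices the two statistics literally coincide, each counting exactly the inverse descents $i \in \iDes(\tau)$. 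Hence the difference $\stat_\P(\tau) - \ides(\tau)$ is supported on the cross-layer indices and equals the number of inverse ascents at which the layer strictly increases minus the number of inverse descents at which the layer strictly decreases. The entire problem is therefore to redistribute these cross-layer contributions.

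My main line of attack would be bijective: to construct a bijection $\Phi \colon \DC(\P) \to \DC(\P)$ with $\ides(\tau) = \stat_\P(\Phi(\tau))$, assembled from local moves that, for a consecutive pair of values $i,i+1$ sitting in distinct layers, trade an ``inverse ascent into a higher layer'' for an ``inverse descent into a lower layer.'' The base case is already in hand: for rooted trees $\DC(\P) = \SymGrp_n$ by \Cref{prop:rootedtrees} and the identity holds by \Cref{cor:Hstarcube}. A natural implementation would therefore induct on a measure of how far $\P$ is from a rooted tree, for instance the number of elements of $\Phat$ with more than one lower cover, since these are exactly the elements producing the multiple saturated chains that make $\DC(\P)$ a proper subset of $\SymGrp_n$.

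A complementary, geometric route would exploit that the $h^*$-polynomial does not depend on the chosen half-open decomposition. Instead of the point $\w = \tfrac{1}{n+1}(1,\dots,n)$ used to derive $\stat_\P$ in \Cref{lem:HO-alcove}, one would look for a generic interior point $\w'$ of $\Lip(\P)$ such that, for each alcove $\q^\tau + \Delta_\tau$ of the triangulation from \Cref{thm:triang}, the number of facets beyond which $\w'$ lies equals $\ides(\tau)$; summing over $\tau \in \DC(\P)$ via \Cref{lem:HO} would then yield the conjecture directly.

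The hardest part, in either approach, is $\DC(\P)$ itself. Descent-compatibility is a \emph{global} condition, constraining the descent count along \emph{every} saturated chain simultaneously, so any local move used to transport $\ides$ to $\stat_\P$ must be shown to preserve it, which is delicate precisely at the elements with several lower covers. In the geometric formulation the same obstruction resurfaces as the fact that the shifts $\q^\tau$ vary from alcove to alcove in a poset-dependent manner, so a single generic point $\w'$ may be unable to compensate for all of them at once; overcoming this would likely require either an inductive reduction over the cover relations of $\P$ or a weighted decomposition sensitive to the layer function $\desP$.
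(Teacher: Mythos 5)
First, a point of calibration: \Cref{con:findstat} is stated in the paper as an open \emph{conjecture}, supported only by FindStat data and by the special cases of rooted trees (\Cref{cor:Hstarcube}) and short hanging trees (\Cref{ex:ides}); the paper contains no proof to compare against. Your submission is consistent with that status, but it is a research plan, not a proof. Your preliminary reduction is correct and worth keeping: by \Cref{thm:Lip-Hstar} the claim is equivalent to the equidistribution of $\stat_\P$ and $\ides$ over $\DC(\P)$, and your bookkeeping of the difference is right --- writing things in terms of the sets $A(\tau,\q^\tau)$ and $D(\tau,\q^\tau)$ of \Cref{lem:HO-alcove}, one gets
\[
  \stat_\P(\tau) - \ides(\tau) \ = \ \big|\{ i \notin \iDes(\tau) \newmid \desP(\tauinv(i)) < \desP(\tauinv(i+1)) \}\big|
  \ - \ \big|\{ i \in \iDes(\tau) \newmid \desP(\tauinv(i)) > \desP(\tauinv(i+1)) \}\big| \,,
\]
so the discrepancy is indeed supported on cross-layer indices, and the base cases you cite are genuine.

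The gap is that neither of your two strategies is carried out at its only nontrivial step. The bijection $\Phi : \DC(\P) \to \DC(\P)$ with $\ides(\tau) = \stat_\P(\Phi(\tau))$ is never defined: you describe the local moves it should consist of, but the whole difficulty --- which you yourself flag --- is that swapping consecutive values to trade cross-layer inverse ascents for inverse descents can destroy descent-compatibility at elements of $\Phat$ with several lower covers, and no mechanism for repairing this is given. Likewise, the proposed induction ``on the distance from a rooted tree'' has no inductive step: removing or merging an element with multiple lower covers changes both $\DC(\P)$ and the layer function $\desP$ in ways you do not control. The geometric route has an additional structural weakness beyond incompleteness: \Cref{lem:HO} with a generic point $\w'$ would prove the conjecture only if $\ides(\tau)$ can be realized, \emph{alcove by alcove}, as the number of facets of $\q^\tau + \Delta_\tau$ beyond $\w'$; this pointwise matching is a strictly stronger statement than equidistribution, and no argument is offered that such a $\w'$ exists (the shifts $\q^\tau$ depend on $\tau$, so a single point may well be unable to realize all the required facet counts simultaneously). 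As it stands, the submission correctly restates the problem and its known special cases but does not prove the conjecture.
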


\section{Ranked posets}\label{sec:ranked}

A poset~$\P$ is \Defn{ranked}, if for all $a,b \in \P$, every maximal chain in
$\Pint{a,b} = \{ c \in \P : a \preceq c \preceq b\}$ has the same length. If
$\P$ has a minimal element, then this is equivalent to the existence of a
\Defn{rank function} $\rho : \P \to \Znn$ such that $\rho(\Pbot) = 0$ and
$\rho(b) = \rho(a) + 1$ for $a \cov b$. 

\begin{prop}\label{prop:CS}
    If $\P$ is a poset such that $\Phat$ is ranked, then $\Lip(\P)$ is
    centrally-symmetric with respect to $\rho$, i.e., $\rho - \Lip(\P) =
    \Lip(\P)$.
\end{prop}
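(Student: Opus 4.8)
The plan is to exhibit the reflection $f \mapsto \rho - f$ as an affine involution of $\R^\P$ that maps $\Lip(\P)$ into itself; since this involution sends each $f$ to its reflection through the center $\tfrac12\rho$, proving the single inclusion $\rho - \Lip(\P) \subseteq \Lip(\P)$ together with the involution property will yield the asserted equality.

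First I would rewrite the defining inequalities of $\Lip(\P)$ in~\eqref{eqn:Lip} uniformly over the cover relations of $\Phat$. Adopting the convention $f(\Pbot) = 0$, the condition $0 \le f(a) \le 1$ for $a \in \min\P$ is exactly $0 \le f(a) - f(\Pbot) \le 1$, which is the cover-relation inequality for $\Pbot \cov a$. Hence $f \in \Lip(\P)$ if and only if $0 \le f(b) - f(a) \le 1$ for \emph{every} cover relation $a \cov b$ in $\Phat$, reading $f(\Pbot) = 0$.

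Next I would invoke the hypothesis that $\Phat$ is ranked, which provides a rank function $\rho$ with $\rho(\Pbot) = 0$ and $\rho(b) - \rho(a) = 1$ for each $a \cov b$. Setting $g \eqdef \rho - f$ (so that $g(\Pbot) = \rho(\Pbot) - f(\Pbot) = 0$ as well), I compute for each cover relation $a \cov b$
\[
    g(b) - g(a) \eq \big(\rho(b) - \rho(a)\big) - \big(f(b) - f(a)\big) \eq 1 - \big(f(b) - f(a)\big).
\]
Because the interval $[0,1]$ is symmetric about $\tfrac12$, we have $0 \le x \le 1$ if and only if $0 \le 1 - x \le 1$; thus $g$ satisfies exactly the inequalities satisfied by $f$, so $g = \rho - f \in \Lip(\P)$ and $\rho - \Lip(\P) \subseteq \Lip(\P)$.

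Finally, since $f \mapsto \rho - f$ is its own inverse, applying it to this inclusion gives $\Lip(\P) \subseteq \rho - \Lip(\P)$, and hence equality. There is no genuine obstacle here beyond checking that the rank function supplies precisely the constant difference $1$ across every cover relation, which is exactly the point at which rankedness is used: on a non-ranked poset the quantities $\rho(b) - \rho(a)$ would not all equal $1$, the computation above would produce a different affine shift on different edges, and the reflection would no longer preserve the unit-length inequalities.
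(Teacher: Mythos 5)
Your proof is correct and is exactly the paper's argument, spelled out in full: the paper's one-line proof also observes that $\rho(b) - \rho(a) - (f(b)-f(a)) = 1 - (f(b)-f(a))$ across every cover relation $\Pbot \preceq a \cov b$, which is your computation for $g = \rho - f$. The additional details you supply (the uniform rewriting of~\eqref{eqn:Lip} with $f(\Pbot)=0$, and the involution argument upgrading the inclusion to an equality) are precisely what the paper leaves implicit.
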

\begin{proof}
    Simply note that $\rho(b) - \rho(a) - (f(b) - f(a)) = 1 - (f(b) - f(a))$
    for all $\Pbot \preceq a \cov b$.
\end{proof}

A lattice polytope $\Po \subset \R^n$ is \Defn{reflexive} if $\0$ is the only
lattice point in the interior and the polar polytope is again a lattice
polytope. A lattice polytope $\Po$ is \Defn{$\boldsymbol r$-Gorenstein} for some $r \in
\Z_{>0}$ if $r\Po$ contains a unique lattice point $\q$ and $r\Po-\q$ is
reflexive.

\begin{prop}\label{prop:Gorenstein}
    If $\P$ is a poset such that $\Phat$ is ranked, then $\Lip(\P)$ is
    $2$-Gorenstein.
\end{prop}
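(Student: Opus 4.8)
The plan is to combine the central symmetry from \Cref{prop:CS} with the integrality of the rank function $\rho$. Since $\rho - \Lip(\P) = \Lip(\P)$, the polytope $\Lip(\P)$ is centrally symmetric about $\tfrac12\rho$, and therefore $2\Lip(\P)$ is centrally symmetric about $\rho$, which is a lattice point because $\rho$ is integer-valued. This makes $\rho$ the natural candidate for the unique interior lattice point of $2\Lip(\P)$, and it suffices to prove that $Q \eqdef 2\Lip(\P) - \rho$ is reflexive: once this is known, $\0$ is the unique interior lattice point of $Q$, so $\rho$ is the unique interior lattice point of $2\Lip(\P)$ and $2\Lip(\P)-\rho$ is reflexive, which is exactly the definition of $2$-Gorenstein.

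First I would compute $Q$ explicitly from \eqref{eqn:Lip}. Scaling by $2$, a function $g$ lies in $2\Lip(\P)$ if and only if $g(\Pbot)=0$ and $0 \le g(b)-g(a) \le 2$ for every cover relation $\Pbot \preceq a \cov b$ of $\Phat$. Writing $h = g - \rho$ and using that $\Phat$ is ranked, so that $\rho(b)-\rho(a) = 1$ for each such cover relation, these inequalities become
\[
    -1 \ \le \ h(b) - h(a) \ \le \ 1 ,
\]
while $h(\Pbot)=0$ is preserved. Hence
\[
    Q \eq \big\{\, h \in \R^\P \newmid h(\Pbot)=0, \ -1 \le h(b)-h(a) \le 1 \text{ for all } \Pbot \preceq a \cov b \,\big\}.
\]
This is the one place where rankedness enters decisively: it is precisely what collapses the asymmetric constraints $0 \le g(b)-g(a) \le 2$ into constraints symmetric about $\0$.

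It then remains to read reflexivity off this description, which is routine. The polytope $Q$ is a lattice polytope, since $\Lip(\P)$ is one by \Cref{prop:vertices} and $\rho \in \Z^\P$; and $\0 \in \Int(Q)$ because $-1 < 0 < 1$ strictly. Every defining inequality of $Q$ has the form $h(b)-h(a) \le 1$ or $h(a)-h(b) \le 1$, with right-hand side $1$ and primitive integral normal vector $\pm(\mathbf{e}_b - \mathbf{e}_a)$ (setting $\mathbf{e}_{\Pbot} = \0$); in particular this holds for the facet-defining ones. Thus the polar polytope $Q^\ast$ is the convex hull of integral vectors and hence a lattice polytope. Finally, if $p \in \Int(Q) \cap \Z^\P$, then each facet normal $c$ satisfies $\langle c, p\rangle < 1$ and $\langle c, p\rangle \in \Z$, so $\langle c, p\rangle \le 0$; since the facet normals of a full-dimensional polytope with $\0$ in its interior positively span $\R^\P$, writing $p$ as a nonnegative combination of them gives $\langle p, p\rangle \le 0$, so $p = \0$. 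Hence $\0$ is the unique interior lattice point and $Q$ is reflexive. I expect no real obstacle beyond the symmetrization in the second step; the remaining points are the standard facts characterizing reflexive polytopes.
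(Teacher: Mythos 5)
Your proof is correct and takes essentially the same route as the paper: both rest on the observation that rankedness turns the constraints $0 \le g(b)-g(a) \le 2$ defining $2\Lip(\P)$ into the symmetric description $-1 \le h(b)-h(a) \le 1$ of $2\Lip(\P)-\rho$, from which the polar is visibly the convex hull of the integral facet normals and hence a lattice polytope. The only divergence is in one step: the paper identifies the unique interior lattice point of $2\Lip(\P)$ directly as the rank function (forced by integrality of the strict inequalities), whereas you deduce uniqueness of $\0$ in $\Int(2\Lip(\P)-\rho)$ from the integral right-hand-side-one facet data via the positive-spanning inner-product argument --- both are valid.
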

\begin{proof}
    From~\eqref{eqn:Lip}, it is clear that $f \in \Z^\P$ is in the interior of
    $2\Lip(\P)$ if $f(a) = 1$ for all $a \in \min(\P)$ and $f(b) =
    f(a) + 1$ for all $a \cov b$. Hence, $f$ is a rank function, which is
    unique whenever $\Phat$ is ranked. The polytope $2\Lip(\P) - \rho$ is
    given by $-1 \le f(b) - f(a) \le 1$ for all $\Pbot \preceq a \cov b$,
    which implies that the polar polytope is a lattice polytope with vertices
    $\pm \1_{a}$ and $\pm(\1_{b} - \1_{a})$.
\end{proof}

Stanley~\cite{CCA} noted that, in the case that an $n$-dimensional polytope $\Po$ is
$r$-Gorenstein, one has $h^*_i(\Po)  =  h^*_{n+1-r-i}(\Po)$ for all~$r$.
Bruns--R\"omer~\cite{BR} showed that if an $r$-Gorenstein polytope has a
regular and unimodular triangulation, then the $h^*$-vector is unimodal. For
Lipschitz polytopes, such a triangulation is vouched for by \Cref{thm:triang}
and with \Cref{prop:Gorenstein}, we get the following.

\begin{thm}\label{thm:Lip_Goren}
    Let $\P$ be a poset on $n$ elements such that $\Phat$ is ranked and let
    \[
        \GenFun_{\stat}(\P,z) \ = \  
        g_0(\P)  + g_1(\P) z + \cdots + g_{n-1}(\P) z^{n-1} \ = \ 
        \sum_{\tau \in \DC(\P)} z^{\stat_P(\tau)} \,.
    \]
    Then 
    \[
        g_i(\P) \eq g_{n-1-i}(\P) \quad \text{ and } \quad
        g_i(\P) \ \le \ g_{i+1}(\P) \text{ for }i \le \lfloor\tfrac{n-1}{2}\rfloor.
    \]
\end{thm}

It should be noted that if $\Phat$ is ranked, then $\DC(\P)$ is invariant
under reversals, \ie, if $\tau \in \DC(\P)$ then $\overline\tau \in \DC(\P)$
where $\overline\tau(i) := n+1 - \tau(i)$.  However, this symmetry does not
induce the symmetry of the statistic.

\begin{openproblem}
    Give a combinatorial proof of \Cref{thm:Lip_Goren}.
\end{openproblem}

For posets such that $\Phat$ is ranked, the Lipschitz polytope $\Lip(\P)$ can
also be constructed in a different way, that explains the central-symmetry as
well as the Gorenstein property. If $\Phat$ is ranked, then the order cone
$\OK(\P)$ is a Gorenstein cone, that is, there is a unique point $\q \in
\OK(\P)$ such that $\Int(\OK(\P)) \cap \Z^\P = (\q + \OK(\P)) \cap \Z^\P$.
Hibi~\cite{OrdGoren} showed that $\OK(\P)$ is Gorenstein if and only if
$\Phat$ is ranked and hence $\q = \rho$.  We thus get the following
description of $\Lip(\P)$.

\begin{cor}
  Let $\P$ be a poset such that $\Phat$ is ranked with rank function~$\rho$.
  Then
  \[
      \Lip(\P) \ = \ \OK(\P) \cap ( - \OK(\P) + \rho)\,.
  \]
\end{cor}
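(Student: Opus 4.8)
The plan is to prove the set equality by reducing both sides to explicit systems of linear inequalities and checking that these coincide with the defining inequalities~\eqref{eqn:Lip} of $\Lip(\P)$.

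First I would record the irredundant inequality description of the order cone. Since $\P$ is connected and every element of $\P$ lies above some minimal element, isotonicity together with nonnegativity on $\min(\P)$ forces $f \ge 0$ on all of $\P$; hence
\[
    \OK(\P) \eq \big\{ f \in \R^\P \newmid f(a) \ge 0 \text{ for } a \in \min(\P), \ f(b) - f(a) \ge 0 \text{ for } a \cov b \big\}.
\]
These are exactly the lower inequalities $0 \le f(a)$ and $0 \le f(b) - f(a)$ appearing in~\eqref{eqn:Lip}.

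Next I would compute the reflected and translated cone $-\OK(\P) + \rho$. A point $f$ lies in this set precisely when $\rho - f \in \OK(\P)$, so I apply the inequality description above to $\rho - f$. Using $\rho(\Pbot) = 0$ and $\Pbot \cov a$ for each minimal element gives $\rho(a) = 1$ for all $a \in \min(\P)$, and the rank condition gives $\rho(b) - \rho(a) = 1$ for every cover $a \cov b$. The conditions $(\rho - f)(a) \ge 0$ and $(\rho - f)(b) - (\rho - f)(a) \ge 0$ therefore become $f(a) \le 1$ and $f(b) - f(a) \le 1$, which are exactly the upper inequalities in~\eqref{eqn:Lip}.

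Intersecting the two systems yields $0 \le f(a) \le 1$ for $a \in \min(\P)$ and $0 \le f(b) - f(a) \le 1$ for $a \cov b$, which is the defining description of $\Lip(\P)$, completing the proof. The only point requiring care is the passage from ``nonnegative isotone'' to the minimal facet system for $\OK(\P)$ and the correct evaluation of $\rho$ on minimal elements and along covers; since $\Phat$ is ranked these rank values are well-defined and all equal to~$1$, so there is no genuine obstacle beyond this bookkeeping.
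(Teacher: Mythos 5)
Your proof is correct. The paper gives no separate argument for this corollary: it is presented as dropping out of the preceding discussion, where Hibi's theorem identifies the rank function $\rho$ as the Gorenstein point of the order cone $\OK(\P)$, so that $\Lip(\P)$ appears as the spindle $\OK(\P) \cap (\rho - \OK(\P))$; equivalently, in the paper's terms one can observe that when $\Phat$ is ranked the quasi-metric satisfies $d_{\Phat}(a,b) = \rho(b) - \rho(a)$, so the Lipschitz condition on $f$ is literally isotonicity of $\rho - f$. Your route is a direct comparison of inequality systems: nonnegativity on $\min(\P)$ together with the cover inequalities describes $\OK(\P)$, the membership $f \in -\OK(\P)+\rho$ unwinds to $\rho - f \in \OK(\P)$, and the evaluations $\rho(a)=1$ for $a \in \min(\P)$ and $\rho(b)-\rho(a)=1$ for $a \cov b$ turn those constraints into exactly the upper inequalities of \eqref{eqn:Lip}. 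This is self-contained and needs neither Hibi's result nor any Gorenstein machinery; what the paper's framing buys instead is the conceptual point that $\rho$ is not an ad hoc translation vector but the canonical interior (Gorenstein) point of the cone, which is what makes the corollary "explain" the central symmetry and Gorenstein property established earlier. Two cosmetic remarks: connectivity of $\P$ is not actually needed in your first step (finiteness already guarantees every element dominates a minimal element), and the irredundancy you claim for the inequality description of $\OK(\P)$ is never used --- validity of the description suffices.
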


In this case $\Lip(\P)$ is a \emph{spindle} in the sense of
Santos~\cite{Santos}.

\section{\texorpdfstring{$\P$}{P}-Hypersimplices}\label{sec:Hyp}
\newcommand\height{\mathrm{ht}}

Let $(\P,\preceq)$ be a poset with a unique maximal element $\Ptop$.
Let $\height(\P)$ be the number of elements in a maximal chain in $\P$, the
\Defn{height} of~$\P$.  For $1 \le k \le \height(\P)$, we define the \Defn{$\boldsymbol{(\P,k)}$-hypersimplex} as 
\[
    \Delta(\P,k) \ \eqdef \ \big\{ f \in \Lip(\P) \newmid k-1 \le f(\Ptop)  \le
    k \big\}\,.
\]
The $(\P,k)$-hypersimplices are again lattice polytopes and, in fact, alcoved
and we can refine the description of vertices of \Cref{prop:vertices}.

\begin{prop}
    The vertices of $\Delta(\P,k)$ correspond exactly to neighbor-closed
    chains of non\-empty filters in $\P$ of length $k-1$ or $k$.
\end{prop}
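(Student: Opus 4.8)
The plan is to build directly on \Cref{prop:vertices}, using the single extra observation that since $\Ptop$ is the unique maximal element of the finite poset $\P$, it is in fact the greatest element, so $a \preceq \Ptop$ for every $a \in \P$. From this I would deduce that every nonempty filter $F \subseteq \P$ contains $\Ptop$: choosing any $a \in F$ we have $a \preceq \Ptop$, whence $\Ptop \in F$. Consequently, if $\v = \1_{F_m} + \cdots + \1_{F_1}$ is the vertex of $\Lip(\P)$ attached to a neighbor-closed chain $F_m \subset \cdots \subset F_1$ of nonempty filters, then $\Ptop$ lies in all $m$ of these filters and hence $\v(\Ptop) = m$. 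In other words, the value at $\Ptop$ of a lattice point of $\Lip(\P)$ is exactly the length of its defining chain of filters. This is the one genuinely nontrivial ingredient; everything else is bookkeeping on top of it.

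With that in hand I would describe the lattice points of $\Delta(\P,k)$. Because $\Delta(\P,k) \subseteq \Lip(\P)$, every lattice point of $\Delta(\P,k)$ is a lattice point of $\Lip(\P)$, and by the proof of \Cref{prop:vertices} those are precisely the vertices of $\Lip(\P)$. Intersecting with the slab $k-1 \le f(\Ptop) \le k$ and substituting $\v(\Ptop) = m$, a vertex $\v$ of $\Lip(\P)$ lies in $\Delta(\P,k)$ if and only if $k-1 \le m \le k$, that is, $m \in \{k-1,k\}$. Thus the lattice points of $\Delta(\P,k)$ are exactly the $\v$ arising from neighbor-closed chains of nonempty filters of length $k-1$ or $k$.

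It then remains to identify these lattice points with the actual vertices of $\Delta(\P,k)$, and here the two containments meet. On one side, $\Delta(\P,k)$ is a lattice polytope (it is alcoved, being carved from $\Lip(\P)$ by the hyperplanes $\{f(\Ptop) = k-1\}$ and $\{f(\Ptop) = k\}$, which belong to the defining affine braid arrangement), so each of its vertices is one of the lattice points just listed. On the other side, if $\v$ is a vertex of $\Lip(\P)$ that happens to lie in $\Delta(\P,k)$, then a supporting hyperplane of $\Lip(\P)$ meeting it only in $\v$ also supports the smaller polytope $\Delta(\P,k)$ and meets it only in $\v$, so $\v$ is a vertex of $\Delta(\P,k)$ as well. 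Combining the two directions pins down the vertices of $\Delta(\P,k)$ as precisely the neighbor-closed chains of nonempty filters of length $k-1$ or $k$, as claimed.

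The main point to watch is exactly the step in the last paragraph: a priori the slicing could create new vertices along edges of $\Lip(\P)$, and one must rule out that any such vertex is a non-lattice point falling outside the stated description. This is guaranteed by the alcoved structure — since both slicing hyperplanes lie in the braid arrangement, $\Delta(\P,k)$ remains a union of alcoves and hence a lattice polytope, so no non-integral vertices appear. Once this is noted, the characterization follows without further computation.
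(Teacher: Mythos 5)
Your proof is correct and follows exactly the route the paper intends: the paper states this proposition without proof as a refinement of \Cref{prop:vertices}, relying on precisely the two facts you supply, namely that $\Delta(\P,k)$ is alcoved (hence a lattice polytope, so its vertices are among the lattice points of $\Lip(\P)$, which are the vertices of $\Lip(\P)$) and that $\Ptop$ lies in every nonempty filter, so $\v(\Ptop)$ equals the length of the defining chain. Your observation that a vertex of $\Lip(\P)$ contained in $\Delta(\P,k)$ remains extreme in the smaller polytope closes the argument cleanly and matches the intended reasoning.
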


In particular, this gives a nice interpretation of the first
$(\P,k)$-hypersimplex.

\begin{cor}\label{cor:Hyp_OP}
    For every poset $\P$ with~$\Ptop$, $\Delta(\P,1)$ is the order polytope $\OP(\P)$.
    Moreover, if $\Phat$ is ranked, then 
    \[
        \Delta(\P,k) \ \cong \ \Delta(\P,\height(\P)+1-k) \,,
    \]
    for $1 \le k \le \height(\P)$.
\end{cor}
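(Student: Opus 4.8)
The plan is to treat the two assertions separately, the first by directly comparing defining inequalities and the second by invoking the central symmetry recorded in \Cref{prop:CS}.

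For the identity $\Delta(\P,1) = \OP(\P)$, I would simply unwind both descriptions. By definition $\Delta(\P,1) = \{f \in \Lip(\P) : 0 \le f(\Ptop) \le 1\}$. The cover inequalities defining $\Lip(\P)$ force $f$ to be isotone, and together with $0 \le f(a)$ for $a \in \min(\P)$ this yields $f \ge 0$ throughout. Since $\Ptop$ is the unique maximal element, $f(\Ptop) = \max_a f(a) = \|f\|_\infty$, so the slicing constraint $f(\Ptop) \le 1$ is exactly $\|f\|_\infty \le 1$ while $0 \le f(\Ptop)$ is automatic. The key observation is that the \emph{upper} Lipschitz bounds $f(b) - f(a) \le 1$ become redundant once $f \ge 0$ and $\|f\|_\infty \le 1$, since $f(b) - f(a) \le f(b) \le 1$. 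Conversely, any $f \in \OP(\P)$ is nonnegative, isotone, and satisfies $\|f\|_\infty \le 1$, hence meets all defining inequalities of $\Lip(\P)$ together with $0 \le f(\Ptop) \le 1$. This double inclusion gives the first claim.

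For the isomorphism $\Delta(\P,k) \cong \Delta(\P,\height(\P)+1-k)$, the idea is to use the affine involution $\iota : f \mapsto \rho - f$. By \Cref{prop:CS}, when $\Phat$ is ranked this map sends $\Lip(\P)$ onto itself, and since $\rho \in \Z^\P$ it is lattice-preserving. It therefore suffices to track how $\iota$ acts on the constraint $k-1 \le f(\Ptop) \le k$. Here I would first record that $\rho(\Ptop) = \height(\P)$: a maximal chain of $\P$ runs from a minimal element $m$, with $\rho(m) = 1$ since $\Pbot \cov m$, up to $\Ptop$, so its number of elements equals $\rho(\Ptop)$. Consequently $(\iota f)(\Ptop) = \rho(\Ptop) - f(\Ptop) = \height(\P) - f(\Ptop)$, and the inequalities $k-1 \le f(\Ptop) \le k$ transform into $\height(\P)-k \le (\iota f)(\Ptop) \le \height(\P)-k+1$, i.e.\ into the defining constraint of $\Delta(\P,\height(\P)+1-k)$. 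Thus $\iota$ restricts to a lattice isomorphism between the two hypersimplices.

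I expect the main obstacle to be purely bookkeeping: pinning down the value $\rho(\Ptop) = \height(\P)$ and keeping the off-by-one in the index $k \mapsto \height(\P)+1-k$ consistent with the half-open slicing convention $k-1 \le f(\Ptop) \le k$. Everything substantive is already carried by \Cref{prop:CS}; once the height identity is fixed, the transformation of the single extra inequality is a one-line computation.
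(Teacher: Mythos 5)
Your proof is correct. The second half is exactly the argument the paper intends: \Cref{prop:CS} supplies the lattice-preserving involution $f \mapsto \rho - f$, and the only bookkeeping needed is the identity $\rho(\Ptop) = \height(\P)$, which you verify correctly (note that $\rho$ being a rank function on $\Phat$ with $\rho(\Pbot)=0$ forces $\rho(m)=1$ on minimal elements of $\P$, so every maximal chain of $\P$ has exactly $\rho(\Ptop)$ elements). For the first half, however, you take a genuinely different route from the one the paper implies. The corollary is placed directly after the proposition describing the vertices of $\Delta(\P,k)$ as neighbor-closed chains of nonempty filters of length $k-1$ or $k$; for $k=1$ these are the empty chain and single filters, giving vertex set $\{\0\} \cup \{\1_F : F \text{ a nonempty filter}\}$, which is precisely the vertex set of Stanley's order polytope, whence $\Delta(\P,1) = \OP(\P)$ as both are convex hulls of the same points. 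You instead compare the two H-descriptions directly, observing that nonnegativity plus $\|f\|_\infty \le 1$ (equivalently $f(\Ptop)\le 1$, using that $\Ptop$ is the unique maximum) makes the upper Lipschitz bounds $f(b)-f(a)\le 1$ redundant via $f(b)-f(a) \le f(b) \le 1$. Your inequality argument is more self-contained --- the vertex proposition is itself stated without proof in the paper --- while the paper's route is immediate once that proposition is granted and has the advantage of exhibiting the common vertex set explicitly. Both are complete; there is no gap in your argument.
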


In the case that $\P$ is the $n$-chain, $\Delta(\P,k)$ recovers, after the
transformation used already in \Cref{ex:trivial}, the well-known
\Defn{$\boldsymbol{(n,k)}$-hypersimplex}, introduced in~\cite{GGMS},
\begin{align*}
    \Delta(n,k) & \ \eqdef \  \conv\big\{ \y \in \{0,1\}^{n+1} \newmid y_1 + \cdots +
    y_{n+1} = k \big\} \\ 
    &\ \phantom{:}\cong \ \hspace*{24pt} \big\{ \y \in [0,1]^n \newmid k-1 \le y_1 + \cdots +
    y_{n} \le k \big\} \,.
\end{align*}

We can define the notion of \Defn{$\boldsymbol \P$-descents} on
$\DC(\P)$ by $\des_{\P}(\tau) := \desP(\Ptop)$ for $\tau \in
\DC(\P)$.

\begin{cor}\label{cor:P-descents}
    Let $\P$ be a poset on $n$ elements with~$\Ptop$.
    Then, for $1 \le k \le \height(\P)$,
    \[
        n! \cdot \vol \Delta(\P,k) \ = \ \big|\{ \tau \in \DC(\P) :
        \des_{\P}(\tau) = k-1 \}\big| \, .
    \]
\end{cor}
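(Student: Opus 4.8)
The plan is to count the unimodular alcoves of $\Lip(\P)$ that fall into the slab defining $\Delta(\P,k)$, using the triangulation of Theorem~\ref{thm:triang}. We may assume $\P$ is naturally labeled, as both sides of the asserted identity are invariant under relabeling; then $\Ptop$ is the greatest element of $\P$ (a unique maximal element of a finite poset is a maximum) and carries the largest label, so $\Ptop = n$ and $f(\Ptop) = x_n$. By definition, $\Delta(\P,k) = \Lip(\P) \cap \{ k-1 \le x_n \le k\}$, and the two bounding hyperplanes $\{x_n = k-1\}$ and $\{x_n = k\}$ have the form $x_n - x_0 = \text{integer}$ (recall $x_0 = 0$), hence are walls of the affine braid arrangement. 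Consequently no alcove crosses them, the alcove triangulation of $\Lip(\P)$ restricts to a unimodular triangulation of $\Delta(\P,k)$, and it suffices to identify which alcoves lie inside the slab.

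First I would determine the range of $x_n$ over a simplex $\q^\tau + \Delta_\tau$ of the triangulation. Writing $\y = \x - \q^\tau \in \Delta_\tau$, the coordinate $y_n$ occupies position $\tau(n)$ in the chain $0 \le y_{\tauinv(1)} \le \cdots \le y_{\tauinv(n)} \le 1$; setting all earlier coordinates to $0$ and all later ones to $1$ shows that $y_n$ realizes every value in $[0,1]$. Thus $x_n$ ranges over exactly $[q^\tau_n, q^\tau_n + 1]$, and by Theorem~\ref{thm:triang} we have $q^\tau_n = \desP(\Ptop) = \des_{\P}(\tau)$.

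It then follows that $\q^\tau + \Delta_\tau$ is contained in the slab $k-1 \le x_n \le k$ precisely when $[\des_{\P}(\tau), \des_{\P}(\tau)+1] \subseteq [k-1,k]$, that is, when $\des_{\P}(\tau) = k-1$; for every other value of $\des_{\P}(\tau)$ the simplex meets the slab in at most a facet and contributes no volume. Hence $\Delta(\P,k)$ is the union of exactly those alcoves $\q^\tau + \Delta_\tau$ with $\tau \in \DC(\P)$ and $\des_{\P}(\tau) = k-1$. Each such alcove is unimodular of volume $\tfrac{1}{n!}$, so summing over them yields $n! \cdot \vol \Delta(\P,k) = |\{ \tau \in \DC(\P) : \des_{\P}(\tau) = k-1\}|$, as claimed. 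As a consistency check, $\des_{\P}(\tau)$ ranges over $\{0,\ldots,\height(\P)-1\}$, matching the stated range $1 \le k \le \height(\P)$.

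The only step that really requires care is the range computation in the second paragraph: one must verify that $x_n$ sweeps out the whole interval $[q^\tau_n, q^\tau_n+1]$ rather than a proper subinterval, and pair this with the bookkeeping observation that the slab walls are genuine walls of the arrangement, which is what guarantees that no alcove is split and that the induced subdivision of $\Delta(\P,k)$ is again a bona fide unimodular triangulation.
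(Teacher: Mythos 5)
Your proof is correct and follows exactly the route the paper intends: the paper states that this corollary is a direct consequence of \Cref{thm:triang}, namely that the alcove triangulation of $\Lip(\P)$ consists of the unimodular simplices $\q^\tau + \Delta_\tau$ with $\tau \in \DC(\P)$ and $q^\tau_{\Ptop} = \des_\P(\tau)$, so the slab $k-1 \le f(\Ptop) \le k$ collects precisely those alcoves with $\des_\P(\tau) = k-1$. Your write-up supplies the details (natural labeling puts $\Ptop = n$, the slab walls are walls of the affine braid arrangement, and $x_n$ sweeps $[q^\tau_n, q^\tau_n+1]$ on each alcove) that the paper leaves implicit, but the argument is the same.
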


\Cref{cor:P-descents} is a is a direct consequence of \Cref{thm:triang} and
generalizes of the well-known result for $\P$ being an $n$-chain: The volume
of $\Delta(n+1,k+1)$ normalized by $n!$ is the Eulerian number $A(n,k)$.  This
result is attributed to Laplace and was proved by geometric means
in~\cite{StanleyHyp}; see also~\cite{LP, NanLi}.

The central-symmetry of $\Lip(\P)$ in the case that $\Phat$ is ranked
(\Cref{prop:CS}) , yields the following symmetry of $P$-descents, generalizing
the classical symmetry of the descent statistic.

\begin{cor}
    Let $\P$ be a poset with $\Ptop$ such that $\Phat$ is ranked. Then for $1
    \le k \le \height(\P)$
    \[
        \big|\{ \tau \in \DC(\P) : \des_{\P}(\tau) = k-1 \} \big|
    \ = \
        \big|\{ \tau \in \DC(\P) : \des_{\P}(\tau) = \height(\P) - k \} \big| \,.
    \]
\end{cor}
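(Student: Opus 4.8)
The plan is to translate both cardinalities into normalized volumes of $\P$-hypersimplices and then to identify those volumes via the congruence of \Cref{cor:Hyp_OP}. First I would apply \Cref{cor:P-descents} verbatim to the left-hand side, which expresses
\[
    \big|\{ \tau \in \DC(\P) : \des_{\P}(\tau) = k-1 \}\big| \eq n! \cdot \vol \Delta(\P,k)\,.
\]
For the right-hand side I would set $k' \eqdef \height(\P) + 1 - k$, so that the condition $\des_{\P}(\tau) = \height(\P) - k$ reads $\des_{\P}(\tau) = k' - 1$; a second application of \Cref{cor:P-descents} then gives
\[
    \big|\{ \tau \in \DC(\P) : \des_{\P}(\tau) = \height(\P) - k \}\big| \eq n! \cdot \vol \Delta(\P,\height(\P)+1-k)\,.
\]
Note that $k \mapsto \height(\P)+1-k$ preserves the range $1 \le k \le \height(\P)$, so \Cref{cor:P-descents} is applicable in both instances.

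It then remains only to observe that the two hypersimplices have equal volume. Since $\Phat$ is ranked, \Cref{cor:Hyp_OP} supplies the lattice congruence $\Delta(\P,k) \cong \Delta(\P,\height(\P)+1-k)$; congruent polytopes have equal volume, and cancelling the common factor $n!$ finishes the argument.

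There is essentially no obstacle beyond this index bookkeeping: the entire content is already packaged in \Cref{cor:P-descents} and \Cref{cor:Hyp_OP}. If one preferred a self-contained argument rather than citing the congruence, the geometric heart is the central symmetry $\rho - \Lip(\P) = \Lip(\P)$ of \Cref{prop:CS}. The affine involution $f \mapsto \rho - f$ is unimodular, and since $\rho(\Ptop) = \height(\P)$ it carries the slab $k-1 \le f(\Ptop) \le k$ onto the slab $\height(\P) - k \le f(\Ptop) \le \height(\P)+1-k$, thereby identifying $\Delta(\P,k)$ with $\Delta(\P,\height(\P)+1-k)$ and, a fortiori, their volumes. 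Either route yields the claimed equality.
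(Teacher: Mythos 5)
Your proposal is correct and takes essentially the same approach as the paper: the paper obtains this corollary precisely by combining the volume interpretation of \Cref{cor:P-descents} with the central symmetry $\rho - \Lip(\P) = \Lip(\P)$ of \Cref{prop:CS}, which carries $\Delta(\P,k)$ onto $\Delta(\P,\height(\P)+1-k)$ --- exactly your argument, with your citation of \Cref{cor:Hyp_OP} being just the packaged form of that symmetry. Your index bookkeeping ($k' = \height(\P)+1-k$ stays in range, $\rho(\Ptop) = \height(\P)$) and the unimodularity of $f \mapsto \rho - f$ are all sound.
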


The short hanging trees of \Cref{ex:shorttrees} are exactly the connected
posets with $\Ptop$ of height $2$. \Cref{cor:Hyp_OP} yields that in this case
$\Lip(\P)$ can be decomposed into two congruent copies of $\OP(\P)$. The
following combinatorial implication can be seen also directly from
\Cref{ex:ides}.

\begin{cor}
    Let $(\P,\preceq)$ be a short hanging tree on $n$ elements. Then
    \[
        h^*(\Lip(\P),z) \ = \ (1+z) \cdot h^*(\OP(\P),z) \, .
    \]
\end{cor}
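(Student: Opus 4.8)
The plan is to exploit the dissection of $\Lip(\P)$ furnished by \Cref{cor:Hyp_OP}. Since a short hanging tree has $\height(\P) = 2$, the $(\P,k)$-hypersimplices give $\Lip(\P) = \Delta(\P,1) \cup \Delta(\P,2)$, a dissection along the hyperplane $\{f(\Ptop) = 1\}$. By \Cref{cor:Hyp_OP} the first piece is $\Delta(\P,1) = \OP(\P)$, and since $\Phat$ is ranked, the second piece satisfies $\Delta(\P,2) \cong \Delta(\P,1) = \OP(\P)$; concretely the central symmetry $f \mapsto \rho - f$ of \Cref{prop:CS} is a lattice automorphism of $\Lip(\P)$ that interchanges the two pieces and fixes the separating hyperplane $\{f(\Ptop) = 1\}$ setwise, as $\rho(\Ptop) = 2$.

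First I would apply the half-open decomposition of \Cref{lem:HO} with a point $\w$ in the relative interior of $\Delta(\P,1)$. As $\w$ lies in the interior of $\Lip(\P)$, it is not beyond any facet of $\Lip(\P)$, so the only facet it can be beyond is the internal separating facet $\{f(\Ptop)=1\}$ of $\Delta(\P,2)$. Hence $\hopen_\w \Delta(\P,1) = \Delta(\P,1)$ is closed while $\hopen_\w \Delta(\P,2)$ removes exactly this one facet, and \Cref{lem:HO} yields
\[
    h^*(\Lip(\P),z) \ = \ h^*(\OP(\P),z) + h^*\!\left(\hopen_\w \Delta(\P,2),z\right).
\]

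The crux is to show that the half-open piece contributes the extra factor $z$, that is, $h^*(\hopen_\w\Delta(\P,2),z) = z\cdot h^*(\OP(\P),z)$. Transporting by the central symmetry, $\hopen_\w\Delta(\P,2)$ is lattice-equivalent to the order polytope with its top facet removed, $\OP(\P)\setminus\{f(\Ptop)=1\}$. I would compute its Ehrhart function directly: the lattice points of the $k$-th dilate are the order-preserving maps $f\colon\P\to\{0,\dots,k\}$ with $f(\Ptop) \le k-1$, and since $\Ptop$ is the maximum these are exactly the order-preserving maps $\P\to\{0,\dots,k-1\}$. Thus $E(\hopen_\w\Delta(\P,2),k) = E(\OP(\P),k-1)$, and multiplying the Ehrhart series by the resulting shift gives $h^*(\hopen_\w\Delta(\P,2),z) = z\cdot h^*(\OP(\P),z)$. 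Substituting back produces $(1+z)\,h^*(\OP(\P),z)$.

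I expect the main obstacle to be the careful bookkeeping in the half-open step: verifying that $\w$ is beyond precisely the single separating facet of $\Delta(\P,2)$ and no other, and correctly identifying the removed facet with the top facet $\{f(\Ptop)=1\}$ of $\OP(\P)$ under the central symmetry, so that the Ehrhart shift lands on $k-1$ rather than on a reflected index. The remaining generating-function manipulation, $(1-z)^{n+1}\sum_k\big(E(\OP(\P),k)+E(\OP(\P),k-1)\big)z^k = (1+z)h^*(\OP(\P),z)$, is routine once the shift is established.
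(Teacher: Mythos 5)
Your proof is correct and follows essentially the same route as the paper: the paper deduces the corollary from \Cref{cor:Hyp_OP}, namely the dissection of $\Lip(\P)$ into the two congruent pieces $\Delta(\P,1)=\OP(\P)$ and $\Delta(\P,2)$, which are interchanged by the central symmetry of \Cref{prop:CS}. Your half-open bookkeeping via \Cref{lem:HO} and the Ehrhart shift $E(\hopen_\w\Delta(\P,2),k)=E(\OP(\P),k-1)$ simply make explicit the details the paper leaves implicit (the paper also notes an alternative, purely combinatorial derivation from \Cref{ex:ides}).
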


Li~\cite{NanLi} gave an interpretation of the $h^*$-vector of the half-open
hypersimplices
\[
    \Deltaprime(n,k+1)    \ := \ \big\{\y \in [0,1]^n \newmid k < y_1 + \cdots +
    y_{n} \le k+1  \big\}  \,,
\]
for $1 \le k < n$. Using generating functions and shellings of the alcoved
triangulation, the following was shown.

\begin{thm}[{\cite[Thm.~1.3]{NanLi}}]\label{cor:nan}
  \[
    h^*(\Deltaprime(n,k+1),z)\ = \ \sum_{\substack{\tau \in \SymGrp_{n} \\ \des(\tau)=k}} z^{\bigiasc(\tau)}.
  \]
\end{thm}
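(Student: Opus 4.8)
The plan is to realise the half-open hypersimplex as a half-open slab of a Lipschitz polytope and to read off its $h^*$-polynomial from the alcove triangulation, in the same spirit as the proof of \Cref{thm:Lip-Hstar}. Since $\Deltaprime(n,k+1)$ only involves the chain $\P = [n]$, I first apply the lattice equivalence $T$ of \Cref{prop:rooted_cube} (the map from \Cref{ex:trivial}), under which $y_1 + \cdots + y_n = f(\Ptop)$. Thus $T$ identifies $\Deltaprime(n,k+1)$ with the half-open slab $\{f \in \Lip(\P) \newmid k < f(\Ptop) \le k+1\}$, that is, the closed hypersimplex $\Delta(\P,k+1)$ with its lower facet $\{f(\Ptop) = k\}$ removed. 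As $T$ is lattice-preserving, it suffices to compute the $h^*$-polynomial of this half-open slab.

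By \Cref{thm:triang} and \Cref{cor:P-descents} --- using $\des_\P(\tau) = \des(\tau)$ for the chain --- the closed slab $\Delta(\P,k+1)$ is dissected into the unimodular alcoves $\q^\tau + \Delta_\tau$ with $\tau \in \SymGrp_n$ and $\des(\tau) = k$. To half-open this dissection so that exactly the lower facet is deleted, I choose a generic point $\w$ lying just below the slab: any $\w$ with $0 < w_1 < \cdots < w_n$ and $w_1 + \cdots + w_n < k$, for instance $\w = \tfrac{\eps}{n+1}(1,2,\dots,n)$ with $\eps > 0$ small. Such a $\w$ lies strictly inside the cube and below the upper facet, so it is beyond the lower facet $\{f(\Ptop) = k\}$ and beyond no other facet of the slab. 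The half-open decomposition of \Cref{lem:HO} (in the form of \cite{KV}, which is valid for $\w$ outside $\Po$) then gives
\[
    \Deltaprime(n,k+1) \ \cong \ \hopen_\w \Delta(\P,k+1) \ = \ \biguplus_{\substack{\tau \in \SymGrp_n \\ \des(\tau) = k}} \hopen_\w\!\left(\q^\tau + \Delta_\tau\right),
\]
whence $h^*(\Deltaprime(n,k+1),z) = \sum_{\des(\tau) = k} h^*(\hopen_\w(\q^\tau + \Delta_\tau),z)$ by \Cref{lem:HO}.

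To evaluate each summand I would observe that, for $\eps$ small enough, $\w$ is beyond exactly the same facets of every alcove $\q^\tau + \Delta_\tau$ as the point $\tfrac{1}{n+1}(1,\dots,n)$ used in \Cref{lem:HO-alcove}. Indeed, the facets deleted in that lemma are governed solely by the signs of the integer differences $q_{\tauinv(i+1)} - q_{\tauinv(i)}$, with ties (equal $q$-values) broken by the sign of $\tauinv(i) - \tauinv(i+1)$; the perturbed point $\w$ reproduces both, since its relevant coordinate difference has the same sign as the original linear term. Hence $\hopen_\w(\q^\tau + \Delta_\tau)$ is the half-open alcove of \Cref{lem:HO-alcove}, with $h^*$-polynomial $z^{|A(\tau,\q^\tau)| + |D(\tau,\q^\tau)|}$. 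For the chain this exponent was computed in the proof of \Cref{thm:LipLin} to equal $\bigiasc(\tau)$, since there $D(\tau,\q^\tau) = \emptyset$ and $A(\tau,\q^\tau)$ records precisely the big ascents of $\tauinv$. Summing over $\des(\tau) = k$ yields $h^*(\Deltaprime(n,k+1),z) = \sum_{\des(\tau) = k} z^{\bigiasc(\tau)}$, as desired.

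The main obstacle is the correct placement of the half-opening point. It is tempting to reuse the fixed point $\tfrac{1}{n+1}(1,\dots,n)$ from \Cref{thm:Lip-Hstar}, but that point sits at height $\sum_i w_i = n/2$ and so lies below the slab only when $k > n/2$; for small $k$ it lies above, and the resulting half-open slab would be open along its upper facet, producing $h^*(\Deltaprime(n,n-k))$ rather than $h^*(\Deltaprime(n,k+1))$. Choosing $\w$ genuinely below the slab repairs this for all $k$ at once, at the cost of invoking the half-open decomposition for a point outside the polytope. The only genuine computation is then the verification --- immediate from the sign analysis above --- that this lower $\w$ still deletes the same facets of each alcove, so that \Cref{lem:HO-alcove} and the big-ascent count from \Cref{thm:LipLin} apply verbatim.
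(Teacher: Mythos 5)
Your argument is correct, and it is in substance the paper's own proof: the paper obtains this theorem as the chain case of \Cref{cor:HstarPkslide}, by restricting the half-open alcove decomposition of $\Lip(\P)$ to the slabs $\Deltaprime(\P,k+1)$, applying \Cref{lem:HO-alcove} to each alcove, and using the computation from the proof of \Cref{thm:LipLin} that for the chain $\stat_\P(\tau)=\bigiasc(\tau)$ and $\des_\P(\tau)=\des(\tau)$. The one place where you deliberately diverge --- replacing $\w=\tfrac{1}{n+1}(1,\dots,n)$ by the perturbed point $\tfrac{\eps}{n+1}(1,\dots,n)$ --- rests on a coordinate mix-up, not on a genuine obstacle. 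In the coordinates where the alcove triangulation and \Cref{lem:HO-alcove} live (the Lipschitz coordinates, after applying $T^{-1}$), the slab $\Delta(\P,k+1)$ is cut out by the hyperplanes $\{x_n=k\}$ and $\{x_n=k+1\}$, since $f(\Ptop)=x_n$; it only becomes $\{\,k\le y_1+\cdots+y_n\le k+1\,\}$ after passing to cube coordinates, where the slab is not alcoved and the lemma does not apply anyway. The paper's point has $w_n=\tfrac{n}{n+1}<1\le k$, so it already lies below \emph{every} slab with $k\ge1$, is beyond exactly the bottom facet $\{x_n=k\}$ and no other facet of the slab, and can be reused verbatim for all $k$; your claim that for small $k$ it lies above the slab and would yield $h^*(\Deltaprime(n,n-k))$ is false. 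Your substitute point is harmless --- your sign analysis correctly shows it is beyond the same facets of every alcove, and you rightly note that the theorem of \cite{KV} holds for generic points outside the polytope, which the paper also uses implicitly --- but it buys nothing. One further small slip: the general condition you allow for $\w$, namely $0<w_1<\cdots<w_n$ and $w_1+\cdots+w_n<k$, does not by itself prevent $\w$ from being beyond a lateral facet of the slab (one also needs $w_1<1$ and $w_{i+1}-w_i<1$); your concrete instance satisfies these, so the proof stands as written.
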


Since the alcoved triangulation of $\Lip(\P)$ is compatible with restriction
to the half-open $(\P,k)$-hypersimplices
\[
    \Deltaprime(\P,k+1) \ := \ \big\{ f \in \Lip(\P) \newmid k < f(\Ptop) \le
    k+1 \big\} \, ,
\]
we obtain the following generalization of \Cref{cor:nan}, which is the case
that $\P$ is the $n$-chain.

\begin{cor}
\label{cor:HstarPkslide}
    Let $\P$ be a finite poset and $1 \le k \le h(\P)$.
    Then
    \[
      h^*\big(\Deltaprime(\P,k),z\big) \ =\ \sum_{\substack{\tau \in \DC(\P)\\\des(\tau)=k}}
      z^{\stat_P(\tau)} \,.
    \]
\end{cor}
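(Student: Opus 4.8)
The plan is to reuse, almost verbatim, the half-open decomposition behind \Cref{thm:Lip-Hstar} and to read off the contribution of a single slab. After the embedding $\Delta_{\mathrm{id}} \subseteq \Lip(\P) \subseteq \R^n_{\ge0}$ with $\w = \tfrac1{n+1}(1,\dots,n)$, \Cref{lem:HO} yields
\[
  \Lip(\P) \ = \ \biguplus_{\tau \in \DC(\P)} \hopen_\w\bigl(\q^\tau + \Delta_\tau\bigr),
\]
and \Cref{lem:HO-alcove} shows each piece satisfies $h^*(\hopen_\w(\q^\tau + \Delta_\tau),z) = z^{\stat_\P(\tau)}$. The half-open hypersimplex $\Deltaprime(\P,k)$ is the slab cut out by the hyperplanes $\{f(\Ptop) = k-1\}$ and $\{f(\Ptop) = k\}$, both of the form $\{x_n - x_0 = j\}$ and hence members of the affine braid arrangement. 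Consequently each alcove lies in a single closed slab, and by \Cref{thm:triang} the alcove $\q^\tau + \Delta_\tau$ sits in the slab selected by $q^\tau_{\Ptop} = \desP(\Ptop) = \des_\P(\tau)$. Grouping the displayed decomposition by this value therefore gives, for each slab, a disjoint decomposition whose $h^*$ is the sum of $z^{\stat_\P(\tau)}$ over exactly the $\tau \in \DC(\P)$ whose $\P$-descent number matches the index of that slab.

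It then remains to identify the grouped pieces with $\Deltaprime(\P,k)$, which I would do by locating $\w$ relative to the slab hyperplanes. Since $w_{\Ptop} = \tfrac{n}{n+1} < 1$, the point $\w$ lies strictly below every slab hyperplane. A slab alcove therefore keeps its top face (on $\{f(\Ptop)=k\}$, where $\w$ lies on the interior side) but should lose its bottom face (on $\{f(\Ptop) = k-1\}$), matching the asymmetry built into $\Deltaprime(\P,k) = \{k-1 < f(\Ptop) \le k\}$, which is open below and closed above. Granting that the bottom faces are indeed deleted by $\hopen_\w$, the grouped pieces reassemble $\Deltaprime(\P,k)$ exactly, and additivity of the $h^*$-polynomial over this half-open dissection (\Cref{lem:HO}) delivers the formula. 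Specializing to the $n$-chain, where $\desP(\Ptop) = \des$ and $\stat_\P = \bigiasc$, recovers \Cref{cor:nan}.

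The one point demanding genuine care — and the main obstacle — is the boundary bookkeeping just invoked, because the bottom of a slab need not be a facet of the alcoves meeting it: the face $(\q^\tau + \Delta_\tau) \cap \{f(\Ptop) = k-1\}$ has codimension $\tau(\Ptop)$, so $\hopen_\w$ deletes it only if it deletes one of the facets $\{x_{\tauinv(i)} = x_{\tauinv(i+1)}\}$, $0 \le i < \tau(\Ptop)$, that contain it. To prove this always happens for a slab of positive index, I would telescope along the values $q^\tau_{\tauinv(0)} = 0,\, q^\tau_{\tauinv(1)},\, \dots,\, q^\tau_{\tauinv(\tau(\Ptop))} = q^\tau_{\Ptop}$: if none of these facets were beyond $\w$, then the criterion in \Cref{lem:HO-alcove} forces $q^\tau_{\tauinv(i)} \ge q^\tau_{\tauinv(i+1)}$ for every $i < \tau(\Ptop)$, giving $0 = q^\tau_{\tauinv(0)} \ge q^\tau_{\Ptop} \ge 1$, a contradiction. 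Thus the slab-bottom face is always removed, the reassembly is exact, and the computation proving \Cref{thm:Lip-Hstar} carries over slab by slab. (The extreme bottom slab is genuinely different, since there $q^\tau_{\Ptop} = 0$ and the telescoping fails, so the vertex $\0$ is retained; this is exactly why only the interior slabs behave as stated, paralleling the range restriction in \Cref{cor:nan}.)
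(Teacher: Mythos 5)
Your proof is correct and follows essentially the same route as the paper: the paper obtains \Cref{cor:HstarPkslide} in one line from the assertion that the half-open decomposition behind \Cref{thm:Lip-Hstar} is ``compatible with restriction'' to the slabs, and your telescoping argument (if no facet containing the slab-bottom face of $\q^\tau + \Delta_\tau$ were beyond $\w$, then $0 = q^\tau_{\tauinv(0)} \ge q^\tau_{\Ptop} \ge 1$) is precisely the justification of that compatibility which the paper leaves implicit. Your closing caveat about the bottom slab is also well taken, as it accounts for the range restriction in \Cref{cor:nan} and the off-by-one in the paper's indexing of $\Deltaprime(\P,k)$ versus $\des_\P(\tau)$.
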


\bibliographystyle{siam}
\bibliography{LipschitzPolytopes}

\begin{thebibliography}{10}

\bibitem{BeckRobins}
{\sc M.~Beck and S.~Robins}, {\em Computing the {C}ontinuous {D}iscretely:
  {I}nteger-point {E}numeration in {P}olyhedra}, Undergraduate Texts in
  Mathematics, Springer, New York, 2007.

\bibitem{BR}
{\sc W.~Bruns and T.~R\"omer}, {\em {$h$}-vectors of {G}orenstein polytopes},
  J. Combin. Theory Ser. A, 114 (2007), pp.~65--76.

\bibitem{Twisted}
{\sc T.~Chappell, T.~Friedl, and R.~Sanyal}, {\em Two double poset polytopes}.
\newblock Preprint, June 2016, 35 pages,
  \href{https://arxiv.org/abs/1606.04938}{arXiv:1606.04938}.

\bibitem{GGMS}
{\sc I.~M. Gel{$'$}fand, R.~M. Goresky, R.~D. MacPherson, and V.~V. Serganova},
  {\em {Combinatorial geometries, convex polyhedra, and {S}chubert cells}},
  Adv. in Math., 63 (1987), pp.~301--316.

\bibitem{GPT}
{\sc J.~Gouveia, P.~A. Parrilo, and R.~R. Thomas}, {\em Theta bodies for
  polynomial ideals}, SIAM J. Optim., 20 (2010), pp.~2097--2118.

\bibitem{OrdGoren}
{\sc T.~Hibi}, {\em Distributive lattices, affine semigroup rings and algebras
  with straightening laws}, in Commutative algebra and combinatorics ({K}yoto,
  1985), vol.~11 of Adv. Stud. Pure Math., North-Holland, Amsterdam, 1987,
  pp.~93--109.

\bibitem{HP}
{\sc S.~Hopkins and D.~Perkinson}, {\em Bigraphical arrangements}, Trans. Amer.
  Math. Soc., 368 (2016), pp.~709--725.

\bibitem{KV}
{\sc M.~K{\"o}ppe and S.~Verdoolaege}, {\em Computing parametric rational
  generating functions with a primal {B}arvinok algorithm}, Electron. J. Comb
  in., 15 (2008).

\bibitem{LP}
{\sc T.~Lam and A.~Postnikov}, {\em Alcoved polytopes. {I}}, Discrete Comput.
  Geom., 38 (2007), pp.~453--478.

\bibitem{NanLi}
{\sc N.~Li}, {\em Ehrhart {$h^*$}-vectors of hypersimplices}, Discrete Comput.
  Geom., 48 (2012), pp.~847--878.

\bibitem{MR3096140}
{\sc T.~Mansour, M.~Shattuck, and D.~G.~L. Wang}, {\em Counting subwords in
  flattened permutations}, J. Comb., 4 (2013), pp.~327--356.

\bibitem{FS2017}
{\sc M.~Rubey, C.~Stump, et~al.}, {\em Find{S}tat: The {C}ombinatorial
  {S}tatistic {F}inder}.
\newblock \url{www.FindStat.org}, 2017.

\bibitem{Santos}
{\sc F.~Santos}, {\em A counterexample to the {H}irsch conjecture}, Ann. of
  Math. (2), 176 (2012), pp.~383--412.

\bibitem{WSZ}
{\sc R.~Sanyal, A.~Werner, and G.~M. Ziegler}, {\em On {K}alai's conjectures
  concerning centrally symmetric polytopes}, Discrete Comput. Geom., 41 (2009),
  pp.~183--198.

\bibitem{Shi}
{\sc J.~Y. Shi}, {\em The {K}azhdan-{L}usztig cells in certain affine {W}eyl
  groups}, vol.~1179 of Lecture Notes in Mathematics, Springer-Verlag, Berlin,
  1986.

\bibitem{StanleyHyp}
{\sc R.~P. Stanley}, {\em Eulerian partitions of a unit hypercube}, in Higher
  Combinatorics, M.~Aigner, ed., D. Reidel Publishing Co., Dordrecht-Boston,
  Mass., 1977.

\bibitem{TwoPoset}
\leavevmode\vrule height 2pt depth -1.6pt width 23pt, {\em Two poset
  polytopes}, Discrete Comput. Geom., 1 (1986), pp.~9--23.

\bibitem{CCA}
\leavevmode\vrule height 2pt depth -1.6pt width 23pt, {\em Combinatorics and
  commutative algebra}, vol.~41 of Progress in Mathematics, Birkh\"auser
  Boston, Inc., Boston, MA, second~ed., 1996.

\bibitem{Sullivant}
{\sc S.~Sullivant}, {\em Compressed polytopes and statistical disclosure
  limitation}, Tohoku Math. J. (2), 58 (2006), pp.~433--445.

\bibitem{sagemath}
{\sc {The Sage Developers}}, {\em {S}age{M}ath, {M}athematics {S}oftware
  {S}ystem ({V}er.~7.5)}, 2017.
\newblock \url{www.SageMath.org}.

\end{thebibliography}

\end{document}